\title{Uncertainty Quantification by Alternative Decompositions of Multivariate Functions \thanks{This work was supported by the U.S. National Science Foundation under Grants Numbers CMMI-0969044 and CMMI-1130147.}}
\author{Sharif Rahman\thanks{Applied Mathematical \& Computational Sciences, The University of Iowa, Iowa City, IA 52242 ({\tt sharif-rahman@uiowa.edu}).}}
\begin{document}

\maketitle

\begin{abstract}
This article advocates factorized and hybrid dimensional decompositions
(FDD/HDD), as alternatives to analysis-of-variance dimensional decomposition (ADD), for second-moment statistical analysis of multivariate functions. New formulae revealing the relationships between component functions of FDD and ADD are proposed. While ADD or FDD is relevant
when a function is strongly additive or strongly multiplicative, HDD,
whether formed linearly or nonlinearly, requires no specific dimensional
hierarchies. Furthermore, FDD and HDD lead to alternative definitions
of effective dimension, reported in the current literature only for
ADD. New closed-form or analytical expressions are derived for univariate
truncations of all three decompositions, followed by mean-squared error analysis of univariate ADD, FDD, and HDD approximations. The analysis finds appropriate conditions when one approximation is better than the other. Numerical results affirm the theoretical finding that HDD is ideally suited to a general function approximation that may otherwise require higher-variate
ADD or FDD truncations for rendering acceptable accuracy in stochastic solutions.
\end{abstract}

\begin{keywords}
ADD, ANOVA, dimensional decomposition, FDD, HDD, stochastic analysis
\end{keywords}

\begin{AMS}
26B49, 41A61, 49K30, 60H35, 65C60
\end{AMS}

\pagestyle{myheadings}
\thispagestyle{plain}
\markboth{S. RAHMAN}{DIMENSIONAL DECOMPOSITIONS}

\section{Introduction}

Uncertainty quantification of complex systems entails stochastic computing
for a large number of random variables. Although the sampling-based
methods can solve any stochastic problem, they generally require numerous
deterministic trials and are, therefore, cost-prohibitive when each
analysis demands expensive finite-element or similar numerical calculations.
Existing analytical or approximate methods require additional assumptions,
mostly for computational expediency, that begin to deteriorate when
the input-output mapping is highly nonlinear and the input variance
is arbitrarily large. Furthermore, truly high-dimensional problems
are all but impossible to solve using most existing methods, including
numerical integration. The root deterrence to practical computability
is often related to the high dimension of the multivariate integration
or interpolation problem, known as the \emph{curse of dimensionality}
\cite{bellman57}. The dimensional decomposition of a multivariate
function \cite{hoeffding48,sobol69,rahman12,kuo10} addresses the
curse of dimensionality to some extent by developing an input-output
behavior of complex systems with low \emph{effective dimension} \cite{caflisch97},
wherein the degrees of interactions between input variables attenuate
rapidly or vanish altogether.

A prominent variant of dimensional decomposition is the well-known
analysis-of-variance or ANOVA dimensional decomposition (ADD), first
presented by Hoeffding in the 1940s in relation to his seminal work on $U$-statistics
\cite{hoeffding48}. Since then, ADD has been studied by numerous
researchers in disparate fields of mathematics \cite{sobol03,hickernell95}, statistics \cite{owen03,efron81}, finance \cite{griebel10}, and basic and applied sciences \cite{rabitz99}, including engineering disciplines, mostly for uncertainty quantification \cite{xu04,rahman08,rahman09}. However, ADD constitutes a finite
sum of lower-dimensional component functions of a multivariate function,
and is, therefore, predicated on the additive nature of a function
decomposition. In contrast, when a response function is dominantly
of a multiplicative nature, suitable multiplicative-type decompositions,
such as factorized dimensional decomposition (FDD) \cite{tunga05},
should be explored. But existing truncations of FDD are limited to
only univariate or bivariate approximations, because FDD component
functions of three or more variables have yet to be determined.
No error analyses exist comparing ADD and FDD, even for respective
univariate approximations. Nonetheless, ADD or FDD is relevant as
long as the dimensional hierarchy of a stochastic response is also
additive or multiplicative. Unfortunately, the dimensional structure
of a response function, in general, is not known \emph{a priori}.
Therefore, indiscriminately using ADD or FDD for general stochastic
analysis is not desirable. Further complications may arise when a
complex system exhibits a response that is dominantly neither additive
nor multiplicative. In the latter case, hybrid approaches coupling
both additive and multiplicative decompositions, preferably selected
optimally, are needed. For such decompositions, it is unknown which
truncation parameter should be selected when compared with that for
ADD or FDD. Is it possible to solve a stochastic problem by selecting
a lower truncation parameter for hybrid decompositions than for ADD
or FDD? If the answer is yes, then a significant, positive impact
on high-dimensional uncertainty quantification is anticipated. These
enhancements, some of which are indispensable, should be pursued without
sustaining significant additional cost.

The purpose of this paper is threefold. Firstly, a brief exposition
of ADD and FDD is given in Section 3. A theorem, proven herein, reveals
the relationship between all component functions of FDD and ADD, so
far available only for univariate and bivariate component functions.
Three function classes, comprising purely additive functions, purely
multiplicative functions, and their mixtures, are examined to illustrate
when and how one decomposition or approximation is better than the
other. Secondly, a new hybrid approach optimally blending ADD and
FDD approximations, referred to as hybrid dimensional decomposition
(HDD), is presented in Section 4 for second-moment analysis.
Both linear and nonlinear mixtures of ADD and FDD approximations are
supported. Gaining insights from FDD and HDD, alternative definitions of effective
dimension are proposed. Thirdly, Section 5 reports new explicit formulae
for respective univariate approximations derived from ADD, FDD, and
HDD. The mean-squared error analyses pertaining to univariate
ADD, FDD, and HDD approximations are also described. Numerical results from
four elementary yet illuminating examples and a practical engineering problem are reported in Sections
3 through 5 as relevant. There are nine new theoretical results stated
or proved in this paper: Theorems \ref{thm:1}, \ref{thm:2}, and \ref{thm:3}, Corollaries
\ref{cor:1}, \ref{cor:3}, and \ref{cor:4}, Propositions
\ref{prop:3} and \ref{prop:4}, and Lemma \ref{lem:1}.  Mathematical notations and conclusions
are defined or drawn in Sections 2 and 6, respectively.


\section{Notations}

Let $\mathbb{N}$, $\mathbb{N}_{0}$, $\mathbb{R}$, and $\mathbb{R}_{0}^{+}$
represent the sets of positive integer (natural), non-negative integer,
real, and non-negative real numbers, respectively. For $k\in\mathbb{N}$,
denote by $\mathbb{R}^{k}$ the $k$-dimensional Euclidean space and
by $\mathbb{R}^{k\times k}$ the set of $k\times k$ real-valued matrices.
These standard notations will be used throughout the paper.

Let $(\Omega,\mathcal{F},P)$ be a complete probability space, where
$\Omega$ is a sample space, $\mathcal{F}$ is a $\sigma$-field on
$\Omega$, and $P:\mathcal{F}\to[0,1]$ is a probability measure.
With $\mathcal{B}^{N}$ representing the Borel $\sigma$-field on
$\mathbb{R}^{N}$, $N\in\mathbb{N}$, consider an $\mathbb{R}^{N}$-valued
random vector $\mathbf{X}:=(X_{1},\cdots,X_{N}):(\Omega,\mathcal{F})\to(\mathbb{R}^{N},\mathcal{B}^{N})$,
which describes the statistical uncertainties in all system and input
parameters of a high-dimensional stochastic problem. The probability
law of $\mathbf{X}$ is completely defined by its joint probability
density function $f_{\mathbf{X}}:\mathbb{R}^{N}\to\mathbb{R}_{0}^{+}$.
Assuming independent coordinates of $\mathbf{X}$, its joint probability
density $f_{\mathbf{X}}(\mathbf{x})=\Pi_{i=1}^{i=N}f_{i}(x_{i})$
is expressed by a product of marginal probability density functions
$f_{i}$ of $X_{i}$, $i=1,\cdots,N$, defined on the probability
triple $(\Omega_{i},\mathcal{F}_{i},P_{i})$ with a bounded or an
unbounded support on $\mathbb{R}$. For a given $u\subseteq\{1,\cdots,N\}$,
$f_{-u}(\mathbf{x}_{-u}):=\prod_{i=1,i\notin u}^{N}f_{i}(x_{i})$
defines the marginal density function of $\mathbf{X}_{-u}:=\mathbf{X}_{\{1,\cdots,N\}\backslash u}$.

\section{ANOVA and Factorized Dimensional Decompositions}

Let $y(\mathbf{X}):=y(X_{1},\cdots,X_{N}$), a real-valued, measurable
transformation on $(\Omega,\mathcal{F})$, define a high-dimensional
stochastic response of interest and $\mathcal{L}_{2}(\Omega,\mathcal{F},P)$
represent a Hilbert space of square-integrable functions $y$ with
respect to the induced generic measure $f_{\mathbf{X}}(\mathbf{x})d\mathbf{x}$
supported on $\mathbb{R}^{N}$. Two useful dimensional decompositions, namely, ADD and FDD of $y$, are briefly described as follows.

\subsection{ADD}

The ANOVA dimensional decomposition of $y$, expressed by the recursive
form \cite{rahman12,kuo10,sobol03},
\begin{subequations}
\begin{align}
y(\mathbf{X}) & ={\displaystyle \sum_{u\subseteq\{1,\cdots,N\}}y_{u}(\mathbf{X}_{u})},\label{1a}\\
y_{\emptyset} & =\int_{\mathbb{R}^{N}}y(\mathbf{x})f_{\mathbf{X}}(\mathbf{x})d\mathbf{x},\label{1b}\\
y_{u}(\mathbf{X}_{u}) & ={\displaystyle \int_{\mathbb{R}^{N-|u|}}y(\mathbf{X}_{u},\mathbf{x}_{-u})}f_{-u}(\mathbf{x}_{-u})d\mathbf{x}_{-u}-{\displaystyle \sum_{v\subset u}}y_{v}(\mathbf{X}_{v}),\label{1c}
\end{align}
\label{1}
\end{subequations}
\!\!is a finite, hierarchical expansion
in terms of its input variables with increasing dimensions, where
$u\subseteq\{1,\cdots,N\}$ is a subset with the complementary set
$-u=\{1,\cdots,N\}\backslash u$ and cardinality $0\le|u|\le N$,
and $y_{u}$ is a $|u|$-variate component function describing a constant
or the interactive effect of $\mathbf{X}_{u}=(X_{i_{1}},\cdots,X_{i_{|u|}})$,
$1\leq i_{1}<\cdots<i_{|u|}\leq N$, a subvector of $\mathbf{X}$,
on $y$ when $|u|=0$ or $|u|>0$. The summation in (\ref{1a}) comprises
$2^{N}$ terms, with each term depending on a group of variables indexed
by a particular subset of $\{1,\cdots,N\}$, including the empty set
$\emptyset$. In (\ref{1c}), $(\mathbf{X}_{u},\mathbf{x}_{-u})$
denotes an $N$-dimensional vector whose $i$th component is $X_{i}$
if $i\in u$ and $x_{i}$ if $i\notin u.$ When $u=\emptyset$, the
sum in (\ref{1c}) vanishes, resulting in the expression of the constant
function $y_{\emptyset}$ in (\ref{1b}). When $u=\{1,\cdots,N\}$,
the integration in (\ref{1c}) is on the empty set, reproducing (\ref{1a})
and hence finding the last function $y_{\{1,\cdots,N\}}$. Indeed,
all component functions of $y$ can be obtained by interpreting literally
(\ref{1c}). The non-constant component functions satisfy
the annihilating conditions \cite{rahman12,kuo10,sobol03},
\[
\int_{\mathbb{R}}y_{u}(\mathbf{x}_{u})f_{i}(x_{i})dx_{i}=0\;\text{for}\; i\in u,
\]
resulting in two remarkable properties described by Propositions \ref{prop:1}
and \ref{prop:2}.

\begin{proposition}
\label{prop:1}The ADD component functions $y_{u}$, $\emptyset\ne u\subseteq\{1,\cdots,N\}$,
have zero means, i.e.,
\[
\mathbb{E}\left[y_{u}(\mathbf{X}_{u})\right]=\int_{\mathbb{R}^{|u|}}y_{u}(\mathbf{x}_{u})f_{u}(\mathbf{x}_{u})d\mathbf{x}_{u}=0.
\]

\end{proposition}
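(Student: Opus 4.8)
The plan is to combine the annihilating conditions with the product structure of the joint density. Since the coordinates of $\mathbf{X}$ are independent, the marginal density of $\mathbf{X}_u$ factorizes as $f_u(\mathbf{x}_u)=\prod_{i\in u}f_i(x_i)$, so I would first rewrite the expectation as
\[
\mathbb{E}\left[y_u(\mathbf{X}_u)\right]=\int_{\mathbb{R}^{|u|}}y_u(\mathbf{x}_u)\prod_{i\in u}f_i(x_i)\,d\mathbf{x}_u.
\]

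Because $u\neq\emptyset$, I can fix an arbitrary index $i\in u$ and, invoking Fubini's theorem, perform the integration with respect to $x_i$ before the remaining coordinates. This isolates the one-dimensional factor $\int_{\mathbb{R}}y_u(\mathbf{x}_u)f_i(x_i)\,dx_i$, which is precisely the quantity appearing in the annihilating conditions stated just above the proposition and therefore equals zero. Consequently the entire iterated integral vanishes, giving $\mathbb{E}[y_u(\mathbf{X}_u)]=0$ as claimed.

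The only genuine obstacle is the measure-theoretic justification for interchanging the order of integration. I would dispatch this by observing that $y\in\mathcal{L}_2(\Omega,\mathcal{F},P)$ forces each component function $y_u$ built recursively through (\ref{1}) to be square-integrable as well, so by the Cauchy--Schwarz inequality the product $y_u\prod_{i\in u}f_i$ is absolutely integrable over $\mathbb{R}^{|u|}$; Fubini's theorem then legitimately applies and the iterated integral is well defined. Every other step is an immediate consequence of independence and the annihilating property, so no further work is required.
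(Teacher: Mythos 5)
Your proof is correct and follows exactly the route the paper intends: the paper presents Proposition \ref{prop:1} as an immediate consequence of the annihilating conditions (citing the references rather than writing out the argument), and your Fubini-plus-annihilating-condition derivation, with square-integrability of $y_{u}$ supplying the integrability needed to interchange the order of integration, is the standard proof being alluded to. No gaps.
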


\begin{proposition}
\label{prop:2}Two distinct ADD component functions $y_{u}$ and $y_{v}$,
where $\emptyset\ne u\subseteq\{1,\cdots,N\}$, $\emptyset\ne v\subseteq\{1,\cdots,N\}$,
and $u\neq v$, are orthogonal, i.e., they satisfy the property,
\[
\mathbb{E}\left[y_{u}(\mathbf{X}_{u})y_{v}(\mathbf{X}_{v})\right]=\int_{\mathbb{R}^{|u\cup v|}}y_{u}(\mathbf{x}_{u})y_{v}(\mathbf{x}_{v})f_{u\cup v}(\mathbf{x}_{u\cup v})d\mathbf{x}_{u\cup v}=0.
\]

\end{proposition}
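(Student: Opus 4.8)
The plan is to combine the product structure of the joint density with the annihilating conditions stated just before Proposition \ref{prop:1}. Since $u \ne v$ and both index sets are nonempty, their symmetric difference is nonempty, so at least one of $u \setminus v$ and $v \setminus u$ is nonempty. Because the quantity $\mathbb{E}[y_u(\mathbf{X}_u)y_v(\mathbf{X}_v)]$ is symmetric under interchange of $u$ and $v$, I may assume without loss of generality that there exists an index $i \in u \setminus v$; the decisive feature of such an $i$ is that $x_i$ is an argument of $y_u$ but not of $y_v$.

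First I would write the expectation as the integral over $\mathbb{R}^{|u \cup v|}$ displayed in the statement and invoke the independence of the coordinates of $\mathbf{X}$ to factor the density as $f_{u \cup v}(\mathbf{x}_{u \cup v}) = \prod_{j \in u \cup v} f_j(x_j)$. Isolating the factor $f_i(x_i)$ and carrying out the integration with respect to $x_i$ first, I would use the fact that $y_v(\mathbf{x}_v)$ does not depend on $x_i$ to pull it outside the inner integral. What remains inside is $\int_{\mathbb{R}} y_u(\mathbf{x}_u) f_i(x_i)\, dx_i$, which is precisely the left-hand side of the annihilating condition for $i \in u$ and therefore vanishes identically. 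Consequently the entire iterated integral equals zero, which is the assertion.

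The one step requiring care — and the main obstacle — is the justification for integrating in the order just described, i.e., an appeal to Fubini's theorem. This is legitimate provided the integrand $y_u(\mathbf{x}_u) y_v(\mathbf{x}_v)$ is integrable with respect to the product measure $f_{u \cup v}(\mathbf{x}_{u \cup v})\, d\mathbf{x}_{u \cup v}$. Since each component function is square-integrable, being obtained by conditional integration of $y \in \mathcal{L}_2(\Omega,\mathcal{F},P)$, the Cauchy--Schwarz inequality bounds $\mathbb{E}\left| y_u(\mathbf{X}_u) y_v(\mathbf{X}_v) \right|$ by the product of the $\mathcal{L}_2$-norms of $y_u$ and $y_v$, which is finite. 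With integrability secured, Fubini's theorem permits the reordering, and the annihilating condition then closes the argument.
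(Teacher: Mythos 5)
Your argument is correct and follows exactly the route the paper intends: Proposition \ref{prop:2} is stated there as a direct consequence of the annihilating conditions (with citations rather than a written-out proof), and your derivation---pick $i\in u\setminus v$, factor the product density, integrate over $x_i$ first, and invoke $\int_{\mathbb{R}}y_{u}(\mathbf{x}_{u})f_{i}(x_{i})\,dx_{i}=0$---is the standard way to fill in those details. Your care with Fubini via the Cauchy--Schwarz bound on $\mathbb{E}\left|y_{u}(\mathbf{X}_{u})y_{v}(\mathbf{X}_{v})\right|$ is a welcome addition that the paper leaves implicit.
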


{\em Remark 1.} Traditionally, (\ref{1a})-(\ref{1c}) with $X_{j}$, $j=1,\cdots,N$,
following independent, standard uniform distributions, has been identified
as the ANOVA decomposition \cite{sobol03}; however, more
recent works \cite{rahman08,rahman09} reveal no fundamental requirement
for a specific probability measure of $\mathbf{X}$, provided that
the resultant integrals in (\ref{1a})-(\ref{1c}) exist and are finite.
In this work, the ADD should be interpreted with respect to an arbitrary
but product type probability measure, instilling desirable orthogonal properties.

\subsubsection{ADD Approximation}

The $S$-variate ADD approximation $\tilde{y}_{S}(\mathbf{X})$, say,
of $y(\mathbf{X})$, where $0\le S<N$, is obtained by truncating
the right side of (\ref{1a}) at $0\le|u|\le S$, yielding
\begin{equation}
\tilde{y}_{S}(\mathbf{X})={\displaystyle \sum_{{\textstyle {u\subseteq\{1,\cdots,N\}\atop 0\le|u|\le S}}}y_{u}(\mathbf{X}_{u})}.\label{3}
\end{equation}
Applying the expectation operator on $y(\mathbf{X})$ and $\tilde{y}_{S}(\mathbf{X})$
from (\ref{1a}) and (\ref{3}), respectively, and noting Proposition
\ref{prop:1}, the mean
\begin{equation}
\mathbb{E}\left[\tilde{y}_{S}(\mathbf{X})\right]=y_{\emptyset}\label{4}
\end{equation}
of the $S$-variate ADD approximation matches the exact mean $\mathbb{E}\left[y(\mathbf{X})\right]:=\int_{\mathbb{R}^{N}}y(\mathbf{x})f_{\mathbf{X}}(\mathbf{x})d\mathbf{x}=y_{\emptyset}$,
regardless of $S$. Applying the expectation operator again, this
time on $\left(\tilde{y}_{S}(\mathbf{X})-y_{\emptyset}\right)^{2}$,
and recognizing Proposition \ref{prop:2} results in splitting the
variance \cite{rahman12,sobol01}
\begin{equation}
\tilde{\sigma}_{S}^{2}:=\mathbb{E}\left[\left(\tilde{y}_{S}(\mathbf{X})-y_{\emptyset}\right)^{2}\right]=\sum_{{\textstyle {\emptyset\ne u\subseteq\{1,\cdots,N\}\atop 1\le|u|\le S}}}\sigma_{u}^{2}=\sum_{s=1}^{S}\:\sum_{{\textstyle {\emptyset\ne u\subseteq\{1,\cdots,N\}\atop |u|=s}}}\sigma_{u}^{2}\label{5}
\end{equation}
of the $S$-variate ADD approximation into variances $\sigma_{u}^{2}:=\mathbb{E}\left[y_{u}^{2}(\mathbf{X}_{u})\right]$,
$\emptyset\ne u\subseteq\{1,\cdots,N\}$, of \emph{zero-}mean ADD
component functions $y_{u}$. Clearly, the approximate variance in
(\ref{5}) approaches the exact variance
\[
\sigma^{2}:=\mathbb{E}\left[\left(y(\mathbf{X})-y_{\emptyset}\right)^{2}\right]=\sum_{\emptyset\ne u\subseteq\{1,\cdots,N\}}\sigma_{u}^{2}=\sum_{s=1}^{N}\:\sum_{{\textstyle {\emptyset\ne u\subseteq\{1,\cdots,N\}\atop |u|=s}}}\sigma_{u}^{2},
\]
the sum of all variance terms, when $S\to N$. A normalized version
$\sigma_{u}^{2}/\sigma^{2}$ is often called the global sensitivity
index of $y$ for $\mathbf{X}_{u}$ \cite{sobol01}.

\subsection{FDD}

Consider a multiplicative form,
\begin{equation}
y(\mathbf{\mathbf{X}})={\displaystyle \prod_{u\subseteq\{1,\cdots,N\}}\left[1+z_{u}(\mathbf{X}_{u})\right]},\label{6}
\end{equation}
of the dimensional decomposition of $y$, where $z_{u}$, $u\subseteq\{1,\cdots,N\}$,
are various component functions of input variables with increasing
dimensions. Like the sum in Equation \ref{1a}, the product in Equation
\ref{6} comprises $2^{N}$ terms, with each term depending on a group
of variables indexed by a particular subset of $\{1,\cdots,N\}$,
including the empty set $\emptyset$. This multiplicative decomposition exists and
is unique for any square-integrable function
$y \in \mathcal{L}_{2}(\Omega,\mathcal{F},P)$ with a non-zero mean.

The FDD, originally proposed by Tunga and Demiralp \cite{tunga05} under the name of factorized high-dimensional model representation, has yet to receive due attention for uncertainty quantification of complex systems.  A prime reason why FDD is not on par with ADD is the lack of explicit relationships between their component functions. Lemma \ref{newlem} and Theorem \ref{thm:1} reveal the desired relationships.

\begin{lemma}
\label{newlem}The FDD component functions, $z_{v}$, $\emptyset \ne v\subseteq\{1,\cdots,N\}$, of a square-integrable function $y:\mathbb{R}^{N}\to\mathbb{R}$ with a non-zero mean $\mathbb{E}[y(\mathbf{X})]=y_{\emptyset} \ne 0$ satisfy
\begin{equation}
1+z_{v}(\mathbf{X}_{v}) \ne 0.
\nonumber
\end{equation}
\end{lemma}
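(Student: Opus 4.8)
The plan is to read the statement as the assertion that $1+z_{v}(\mathbf{X}_{v})$ is not (almost surely) the zero function, and to prove it directly from the product representation (\ref{6}) by localizing to the support of $y$. Since $\mathbb{E}[y(\mathbf{X})]=y_{\emptyset}\ne 0$, the function $y$ cannot vanish almost surely, so the event $A:=\{y(\mathbf{X})\ne 0\}$ has positive probability. On $A$ the product in (\ref{6}) equals a finite, nonzero real number, and such a product forces every one of its factors to be a (finite) nonzero real; in particular $1+z_{v}(\mathbf{X}_{v})\ne 0$ throughout $A$. Hence $P(1+z_{v}(\mathbf{X}_{v})\ne 0)\ge P(A)>0$, which is the claim.

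A localized refinement meshes the argument with the recursive construction that Theorem \ref{thm:1} relies on. Writing $M_{v}(\mathbf{X}_{v}):=\int_{\mathbb{R}^{N-|v|}}y(\mathbf{X}_{v},\mathbf{x}_{-v})f_{-v}(\mathbf{x}_{-v})\,d\mathbf{x}_{-v}$ for the marginal integral that is the multiplicative counterpart of the integral in (\ref{1c}), the FDD component functions are characterized by $M_{v}(\mathbf{X}_{v})=\prod_{w\subseteq v}[1+z_{w}(\mathbf{X}_{w})]$ with $M_{\emptyset}=y_{\emptyset}$. The tower property gives $\mathbb{E}[M_{v}(\mathbf{X}_{v})]=\mathbb{E}[y(\mathbf{X})]=y_{\emptyset}\ne 0$, so $\{M_{v}\ne 0\}$ has positive probability, and applying the same factor-by-factor reasoning to $M_{v}$ again yields $1+z_{v}\ne 0$ on that event. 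Carrying this out by induction on $|v|$ has the added benefit of certifying that the denominators $\prod_{w\subsetneq v}[1+z_{w}(\mathbf{X}_{w})]$ appearing in the recursive definition of $1+z_{v}$ are not almost surely zero.

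The step I expect to require the most care is fixing the precise meaning of ``$\ne 0$''. Because no positivity of $y$ is assumed, one cannot expect $1+z_{v}$ to be nonzero everywhere, or even almost everywhere: if $y$ is supported on a proper subdomain of positive measure yet retains a nonzero mean, the associated factor is forced to vanish on that subdomain. The provable and intended reading is therefore that $1+z_{v}$ is not the zero element of $\mathcal{L}_{2}(\Omega,\mathcal{F},P)$, equivalently $P(1+z_{v}(\mathbf{X}_{v})\ne 0)>0$. I would deliberately confine the argument to the event where the product is a genuine nonzero real, so that the implication ``a finite product vanishes only if some factor vanishes'' is unambiguous and no $0\cdot\infty$ indeterminacy from a global factorization can arise.
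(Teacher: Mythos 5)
Your first paragraph is the paper's own proof stated in the contrapositive: the paper supposes $1+z_{v}\equiv 0$, concludes from (\ref{6}) that $y\equiv 0$ and hence $\mathbb{E}[y(\mathbf{X})]=0$, and contradicts $y_{\emptyset}\ne 0$; you run the same implication forward, from $\mathbb{E}[y(\mathbf{X})]\ne 0$ to $P(y\ne 0)>0$ to nonvanishing of every factor on that event. The core mechanism is therefore identical, and your version is correct. What you add is nonetheless worth keeping. Your closing discussion correctly isolates the only reading under which the lemma follows from the stated hypothesis --- that $1+z_{v}$ is not the zero element of $\mathcal{L}_{2}(\Omega,\mathcal{F},P)$ --- and correctly observes that pointwise, or even almost-everywhere, nonvanishing can fail when $y$ vanishes on a set of positive measure while retaining a nonzero mean. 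This matters because the lemma is invoked in the proof of Theorem \ref{thm:1} to legitimize division by $\prod_{v\subset u}\left[1+z_{v}(\mathbf{X}_{v})\right]$ in (\ref{7}), for which ``not identically zero'' is not sufficient; neither the paper's proof nor yours delivers that stronger conclusion, and your remark makes the residual gap explicit rather than hiding it. One caution on your second, localized argument: the characterization $M_{v}(\mathbf{X}_{v})=\prod_{w\subseteq v}\left[1+z_{w}(\mathbf{X}_{w})\right]$ is precisely (\ref{11}), which the paper derives inside the proof of Theorem \ref{thm:1} using this very lemma, so to avoid circularity you would need to establish that identity independently or present the induction as a consequence of, rather than an input to, Theorem \ref{thm:1}.
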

\begin{proof}
Suppose to the contrary that $1+z_{v}(\mathbf{X}_{v}) = 0$ for any $\emptyset \ne v\subseteq\{1,\cdots,N\}$. Then it follows from (\ref{6}) that $y(\mathbf{X})=0$ and hence $\mathbb{E}[y(\mathbf{X})]=0$.  This contradicts the assumption that $\mathbb{E}[y(\mathbf{X})]=y_{\emptyset} \ne 0$, completing the proof.
\end{proof}

\begin{theorem}
\label{thm:1}The recursive relationships between component functions
of the ADD and FDD of a non-zero mean, square-integrable function $y:\mathbb{R}^{N}\to\mathbb{R}$,
represented by (\ref{1a}) and (\ref{6}), respectively, are
\begin{equation}
1+z_{u}(\mathbf{X}_{u})=\frac{{\displaystyle \sum_{v\subseteq u}y_{v}(\mathbf{X}_{v})}}{{\displaystyle \prod_{v\subset u}\left[1+z_{v}(\mathbf{X}_{v})\right]}},\; u\subseteq\{1,\cdots,N\}.\label{7}
\end{equation}
\end{theorem}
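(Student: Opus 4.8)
The plan is to recast the recursive identity (\ref{7}) as a single non-recursive statement and then prove that statement by evaluating both of its sides through the same conditional average. Since Lemma \ref{newlem} guarantees $\prod_{v\subset u}[1+z_v(\mathbf{X}_v)]\ne 0$, clearing the denominator in (\ref{7}) shows it is equivalent to
\begin{equation}
\prod_{v\subseteq u}\left[1+z_v(\mathbf{X}_v)\right]=\sum_{v\subseteq u}y_v(\mathbf{X}_v),\quad u\subseteq\{1,\cdots,N\}.\nonumber
\end{equation}
Thus it suffices to establish this product-sum identity for every $u$; dividing back out the proper-subset product then recovers (\ref{7}). As a sanity check, taking $u=\{1,\cdots,N\}$ reduces the identity to the statement that the two decompositions (\ref{1a}) and (\ref{6}) agree with $y$.

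First I would dispatch the additive (right-hand) side. Writing $g_u(\mathbf{X}_u):=\int_{\mathbb{R}^{N-|u|}}y(\mathbf{X}_u,\mathbf{x}_{-u})f_{-u}(\mathbf{x}_{-u})d\mathbf{x}_{-u}$ for the average of $y$ over the complementary coordinates, the defining recursion (\ref{1c}) reads $y_u(\mathbf{X}_u)=g_u(\mathbf{X}_u)-\sum_{v\subset u}y_v(\mathbf{X}_v)$, which rearranges immediately to the telescoped form $\sum_{v\subseteq u}y_v(\mathbf{X}_v)=g_u(\mathbf{X}_u)$. So the right-hand side of the identity equals $g_u$ with no further work.

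Next I would treat the multiplicative (left-hand) side by applying the same averaging to the FDD (\ref{6}). Because every factor indexed by $w\subseteq u$ depends only on $\mathbf{X}_u$ and hence is constant with respect to the integration variables $\mathbf{x}_{-u}$, it pulls out of the integral, giving
\begin{equation}
g_u(\mathbf{X}_u)=\left(\prod_{w\subseteq u}\left[1+z_w(\mathbf{X}_w)\right]\right)\int_{\mathbb{R}^{N-|u|}}\prod_{w\not\subseteq u}\left[1+z_w(\mathbf{X}_w)\right]f_{-u}(\mathbf{x}_{-u})d\mathbf{x}_{-u}.\nonumber
\end{equation}
The identity would then follow provided the residual average of $\prod_{w\not\subseteq u}[1+z_w]$ equals $1$. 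This is the step I expect to be the main obstacle: the factors indexed by different $w$ share coordinates, so the product cannot be integrated factor by factor, and expanding it produces cross terms $\int z_{w_1}z_{w_2}f_{-u}\,d\mathbf{x}_{-u}$ that are not killed by a naive one-variable annihilating condition. I would handle it by induction on the number of complementary coordinates $N-|u|$, integrating out one index $k\in -u$ at a time and using the normalization intrinsic to the unique FDD to collapse the conditional average of the factors containing $k$; equivalently, I would invoke uniqueness of the FDD applied to $g_u$ as a function of the $|u|$ variables $\mathbf{x}_u$, identifying its factorized components with $\{z_v\}_{v\subseteq u}$. Either route yields $\prod_{v\subseteq u}[1+z_v]=g_u$, and combining this with $\sum_{v\subseteq u}y_v=g_u$ and dividing by the nonzero proper-subset product completes the proof.
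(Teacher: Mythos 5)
Your reformulation of (\ref{7}) as the partial-product identity $\prod_{v\subseteq u}[1+z_{v}(\mathbf{X}_{v})]=\sum_{v\subseteq u}y_{v}(\mathbf{X}_{v})$ is exactly the paper's intermediate step (\ref{11}), and your telescoping of the ADD recursion to get $\sum_{v\subseteq u}y_{v}=g_{u}$ is correct. The gap is the step you yourself flag: the claim that
\begin{equation}
\int_{\mathbb{R}^{N-|u|}}\prod_{w\not\subseteq u}\left[1+z_{w}(\mathbf{X}_{w})\right]f_{-u}(\mathbf{x}_{-u})\,d\mathbf{x}_{-u}=1
\nonumber
\end{equation}
is never established, and as structured it cannot be, because it is equivalent to the statement being proved. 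Indeed $\prod_{w\not\subseteq u}[1+z_{w}]=y/\prod_{w\subseteq u}[1+z_{w}]$, so its conditional average over $\mathbf{x}_{-u}$ is $g_{u}/\prod_{w\subseteq u}[1+z_{w}]$, which equals $1$ if and only if $\prod_{w\subseteq u}[1+z_{w}]=g_{u}$ --- the very identity you are trying to derive. The two escape routes you sketch do not close this circle. The per-factor normalization you would need for the one-coordinate-at-a-time induction does not hold: $\mathbb{E}[1+z_{\{i\}}]=1$ by (\ref{14}), but $\mathbb{E}[1+z_{\{i_{1},i_{2}\}}]$ computed from (\ref{15}) is an expectation of a ratio and is not $1$ in general; only certain grouped products have conditional mean one, and showing that is again the theorem. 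The appeal to ``uniqueness of the FDD applied to $g_{u}$'' presupposes that the FDD components of the truncated function $\sum_{v\subseteq u}y_{v}$ coincide with $\{z_{v}\}_{v\subseteq u}$, i.e., consistency of the decomposition under truncation --- but (\ref{6}) alone is one equation in $2^{N}$ unknown component functions and does not determine the $z_{w}$ or confer any such consistency; that property is the content of the theorem, not a tool available to prove it.

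The paper avoids this circularity by working in the opposite direction: it expands the product (\ref{6}) and groups the resulting terms as $r_{u}(z_{v};v\subseteq u)$, each depending only on $\mathbf{X}_{u}$, matches $r_{u}=y_{u}$ term by term against the ADD (\ref{1a}), and observes that this system involves only $\{z_{v}\}_{v\subseteq u}$ and $\{y_{v}\}_{v\subseteq u}$, which is what licenses the restricted identity (\ref{11}); Lemma \ref{newlem} then permits the division. In effect the $z_{u}$ are being constructed recursively so that the partial products match the partial sums, rather than assumed given and then integrated. To repair your argument you would need to either adopt that constructive matching or supply an independent characterization of the $z_{w}$ (analogous to the annihilating conditions for ADD) strong enough to force the residual conditional average to be one.
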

\begin{proof}
Since (\ref{1a}) and (\ref{6}) represent the same function $y$,
\begin{equation}
{\displaystyle \sum_{u\subseteq\{1,\cdots,N\}}y_{u}(\mathbf{X}_{u})}=\prod_{u\subseteq\{1,\cdots,N\}}\left[1+z_{u}(\mathbf{X}_{u})\right],\label{8}
\end{equation}
which, as is, is unwieldy to solve for $z_{u}$. Instead, expand the
right side of (\ref{6}) to form
\begin{equation}
\begin{split}{\displaystyle y(\mathbf{X})} & =1+z_{\emptyset}{\displaystyle +{\displaystyle \sum_{{\textstyle {u\subseteq\{1,\cdots,N\}\atop |u|=1}}}}r_{u}\left(z_{v}(\mathbf{X}_{v});v\subseteq u\right)}+{\displaystyle \sum_{{\textstyle {u\subseteq\{1,\cdots,N\}\atop |u|=2}}}}r_{u}\left(z_{v}(\mathbf{X}_{v});v\subseteq u\right)+\cdots+\\
 & \;\;\;\;\; r_{\{1,\cdots,N\}}\left(z_{v}(\mathbf{X}_{v});v\subseteq\{1,\cdots,N\}\right)\\
 & =\sum_{u\subseteq\{1,\cdots,N\}}r_{u}\left(z_{v}(\mathbf{X}_{v});v\subseteq u\right),
\end{split}
\label{9}
\end{equation}
where $r_{u}\left(z_{v}(\mathbf{X}_{v});v\subseteq u\right)$ is a
function of at most $|u|$-variate multiplicative component functions
of $y$. For instance, when $u=\emptyset$, $u=\{i\}$, and $u=\{i_{1},i_{2}\}$,
$i,i_{1},i_{2}=1,\cdots,N$, $i_{2}>i_{1}$, the corresponding $r_{u}$-functions
are $r_{\emptyset}(z_{\emptyset})=1+z_{\emptyset}$, $r_{\{i\}}(z_{\emptyset},z_{\{i\}}(X_{i}))$,
and $r_{\{i_{1},i_{2}\}}(z_{\emptyset},z_{\{i_{1}\}}(X_{i_{1}}),z_{\{i_{2}\}}(X_{i_{2}}),z_{\{i_{1},i_{2}\}}(X_{i_{1}},X_{i_{2}}))$,
respectively. Comparing (\ref{1a}) and (\ref{9}) yields the recursive
relationship,
\begin{equation}
r_{u}\left(z_{v}(\mathbf{X}_{v});v\subseteq u\right)=y_{u}(\mathbf{X}_{u}),\label{10}
\end{equation}
which, on inversion, expresses $z_{u}$, $u\subseteq\{1,\cdots N\}$,
in terms of the additive ANOVA component functions $y_{v}$, $v\subseteq u$.
Therefore, all remaining additive or multiplicative component functions of (\ref{8})
not involved can be ignored. Indeed, setting
$y_{v}=z_{v}=0$ for all $v\nsubseteq u$ in (\ref{8}) results in
\begin{equation}
{\displaystyle \sum_{v\subseteq u}y_{v}(\mathbf{X}_{v})}=\prod_{v\subseteq u}\left[1+z_{v}(\mathbf{X}_{v})\right]=\left[1+z_{u}(\mathbf{X}_{u})\right]\prod_{v\subset u}\left[1+z_{v}(\mathbf{X}_{v})\right],\label{11}
\end{equation}
where Lemma \ref{newlem} assures that $1+z_{v}(\mathbf{X}_{v})\ne 0$ for any $\emptyset\ne v \subseteq u$.  On inversion, (\ref{11}) yields (\ref{7}), completing the proof.~\end{proof}
\begin{corollary}
\label{cor:1}Recursive evaluations of (\ref{7}) eliminate $1+z_{v}$,
$v\subset u$, leading to an explicit form of
\begin{equation}
1+z_{u}(\mathbf{X}_{u})=\cfrac{{\displaystyle \sum_{w_{|u|}\subseteq u}y_{w_{|u|}}\left(\mathbf{X}_{w_{|u|}}\right)}}{{\displaystyle \prod_{w_{|u|}\subset u}}\cfrac{{\displaystyle \sum_{w_{|u|-1}\subseteq w_{|u|}}y_{w_{|u|-1}}\left(\mathbf{X}_{w_{|u|-1}}\right)}}{{\displaystyle \prod_{w_{|u|-1}\subset w_{|u|}}}\cfrac{\ddots}{\ddots{\displaystyle \prod_{w_{2}\subset w_{3}}}\cfrac{{\displaystyle \sum_{w_{1}\subseteq w_{2}}}y_{w_{1}}\left(\mathbf{X}_{w_{1}}\right)}{{\displaystyle \prod_{w_{1}\subset w_{2}}}\cfrac{{\displaystyle \sum_{w_{0}\subseteq w_{1}}}y_{w_{0}}\left(\mathbf{X}_{w_{0}}\right)}{{\displaystyle 1}}}}}}\label{12}
\end{equation}
for any $u\subseteq\{1,\cdots,N\}$, solely in terms of the ANOVA
component functions.
\end{corollary}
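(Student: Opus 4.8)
The plan is to establish (\ref{12}) by complete induction on the cardinality $|u|$, viewing (\ref{7}) as a single-step recursion whose right side still contains the unknown factors $1+z_v$, $v\subset u$, in its denominator, and then expanding each such factor until only ANOVA component functions remain. First I would settle the base case $u=\emptyset$: the only subset of $\emptyset$ is $\emptyset$ itself and the product over the empty index set equals $1$, so (\ref{7}) collapses to $1+z_\emptyset=y_\emptyset$, which is nonzero by the hypothesis $y_\emptyset\ne 0$ and which constitutes the innermost layer $\sum_{w_0\subseteq w_1}y_{w_0}$ over $1$ appearing in (\ref{12}).

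For the inductive step I would assume that for every proper subset $v\subset u$ the factor $1+z_v(\mathbf{X}_v)$ has already been expressed, through (\ref{7}) applied to $v$, as a fraction whose numerator is $\sum_{w\subseteq v}y_w(\mathbf{X}_w)$ and whose denominator is the product over $w\subset v$ of the analogous fractions in the $y$'s. Substituting these expressions into the denominator $\prod_{v\subset u}[1+z_v(\mathbf{X}_v)]$ of (\ref{7}) written for $u$ replaces one layer of multiplicative functions by one further layer of nested fractions involving only ANOVA functions. Since every substitution strictly decreases the cardinality of the set being expanded, the recursion terminates; along each branch it halts once $\emptyset$ is reached, where $1+z_\emptyset=y_\emptyset$ sits over the empty product $1$, and the longest branch, following a maximal chain $\emptyset\subset\cdots\subset u$, produces the $|u|$ nested layers displayed in (\ref{12}). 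Relabelling the index sets along each chain as $w_{|u|},w_{|u|-1},\ldots,w_0$ then casts the fully expanded expression into the continued-fraction form (\ref{12}).

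The step I expect to be the main obstacle is not the algebra but the bookkeeping of the nested indices: one must verify that the running summations and products in (\ref{12}), indexed by the decreasing chain $w_{|u|}\subseteq u$, $w_{|u|-1}\subset w_{|u|}$, and so on, correctly enumerate every factor generated by the repeated substitution, with branches of differing lengths all terminating at the common bottom value $y_\emptyset$. In parallel I would confirm well-definedness at each stage, since every denominator factor $1+z_v(\mathbf{X}_v)$ with $\emptyset\ne v$ is nonzero by Lemma \ref{newlem} and the bottom factor $1+z_\emptyset=y_\emptyset$ is nonzero by assumption, so that no division by zero arises anywhere in the expansion.
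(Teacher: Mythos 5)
Your proposal is correct and follows essentially the same route as the paper, which offers no separate proof for Corollary \ref{cor:1} precisely because it is obtained by recursively substituting (\ref{7}) into its own denominator until every branch bottoms out at $1+z_{\emptyset}=y_{\emptyset}$, exactly as you describe. Your added verification that no denominator vanishes, via Lemma \ref{newlem} and the hypothesis $y_{\emptyset}\ne 0$, is a sound (if implicit in the paper) piece of bookkeeping rather than a departure from the argument.
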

\,
\begin{corollary}
\label{cor:2}The multiplicative constant, univariate, and bivariate
component functions of a square-integrable function $y:\mathbb{R}^{N}\to\mathbb{R}$,
obtained by setting $u=\emptyset$, $u=\{i\};\: i=1,\cdots,N$, and
$u=\{i_{1},i_{2}\};\: i_{1}<i_{2}=1,\cdots,N$, respectively, in (\ref{7})
or (\ref{12}) are
\begin{equation}
{\displaystyle 1+z_{\emptyset}}={\displaystyle y_{\emptyset}},\label{13}
\end{equation}
\begin{equation}
1+{\displaystyle z_{\{i\}}(X_{i})}=\frac{{\displaystyle {\displaystyle {\displaystyle y_{\emptyset}}+y_{\{i\}}(X_{i})}}}{{\displaystyle y_{\emptyset}}},\label{14}
\end{equation}
and
\begin{equation}
1+{\displaystyle z_{\{i_{1},i_{2}\}}(X_{i_{1}},X_{i_{2}})}=\frac{{\displaystyle y_{\emptyset}}+y_{\{i_{1}\}}(X_{i_{1}})+y_{\{i_{2}\}}(X_{i_{2}})+{\displaystyle {\displaystyle y_{\{i_{1},i_{2}\}}(X_{i_{1}},X_{i_{2}})}}}{{\displaystyle {\displaystyle y_{\emptyset}}}{\displaystyle \left[\frac{{\displaystyle y_{\emptyset}}+y_{\{i_{1}\}}(X_{i_{1}})}{{\displaystyle y_{\emptyset}}}\right]}{\displaystyle \left[\frac{{\displaystyle y_{\emptyset}}+y_{\{i_{2}\}}(X_{i_{2}})}{{\displaystyle y_{\emptyset}}}\right]}}.\label{15}
\end{equation}

\end{corollary}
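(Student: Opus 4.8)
The plan is to specialize the recursive identity (\ref{7}) of Theorem \ref{thm:1} to the three smallest cardinalities $|u|=0,1,2$, substituting each lower-order result into the denominator of the next. Since (\ref{7}) already expresses $1+z_u$ entirely in terms of the ANOVA functions $y_v$, $v\subseteq u$, and the lower-order factors $1+z_v$, $v\subset u$, no new machinery is needed; the whole argument reduces to bookkeeping over subsets together with the empty-product convention.

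First I would set $u=\emptyset$ in (\ref{7}). The numerator $\sum_{v\subseteq\emptyset}y_v$ contains only the term $v=\emptyset$ and hence equals $y_\emptyset$, while the denominator $\prod_{v\subset\emptyset}[1+z_v]$ is an empty product and therefore equals $1$. This immediately yields (\ref{13}).

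Next I would take $u=\{i\}$. The numerator is $\sum_{v\subseteq\{i\}}y_v=y_\emptyset+y_{\{i\}}(X_i)$, and since the only proper subset of $\{i\}$ is $\emptyset$, the denominator is the single factor $1+z_\emptyset$, which equals $y_\emptyset$ by (\ref{13}); this produces (\ref{14}). Finally, for $u=\{i_1,i_2\}$ the numerator is the four-term sum $y_\emptyset+y_{\{i_1\}}(X_{i_1})+y_{\{i_2\}}(X_{i_2})+y_{\{i_1,i_2\}}(X_{i_1},X_{i_2})$, and the proper subsets are exactly $\emptyset$, $\{i_1\}$, and $\{i_2\}$; substituting (\ref{13}) for $1+z_\emptyset$ and (\ref{14}) for $1+z_{\{i_1\}}$ and $1+z_{\{i_2\}}$ into the denominator $\prod_{v\subset\{i_1,i_2\}}[1+z_v]$ delivers (\ref{15}).

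The only point requiring care—rather than a genuine obstacle—is the correct enumeration of the proper subsets and the empty-product convention at $u=\emptyset$; Lemma \ref{newlem} guarantees that every factor $1+z_v$ entering a denominator is nonzero, so each division is legitimate. Because (\ref{7}) holds for arbitrary $u\subseteq\{1,\cdots,N\}$, all three formulae follow purely by specialization, with no approximation or limiting argument involved.
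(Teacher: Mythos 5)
Your proposal is correct and follows exactly the route the paper intends: the corollary is stated as an immediate specialization of (\ref{7}) to $u=\emptyset$, $u=\{i\}$, and $u=\{i_{1},i_{2}\}$, with each lower-order factor substituted into the next denominator and the empty-product convention handling the base case. Your invocation of Lemma \ref{newlem} to justify the divisions is a welcome, if minor, point of extra care.
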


{\em Remark 2.} Equations (\ref{13}), (\ref{14}), and (\ref{15}) can also
be obtained employing the identity and first- and second-degree idempotent
operators \cite{tunga05}. However, to obtain similar expressions
for trivariate and higher-variate multiplicative component functions,
an extensive amount of algebra associated with third- and higher-degree
idempotent operators will be required. This is a primary reason why
component functions with three or more variables have yet to be reported
in the current literature. Theorem \ref{thm:1}, in contrast, is simpler
and, more importantly, provides a general expression $-$ (\ref{7})
or (\ref{12}) $-$ that is valid for a multiplicative component function
of an arbitrary number of variables.

{\em Remark 3.} When $y_{\emptyset}$ is\emph{ zero} or is close to \emph{zero},
(\ref{6}) through (\ref{15}) may fail or become ill-conditioned,
raising questions about the suitability of an FDD approximation in
such conditions. However, they do not necessarily imply that the FDD
cannot be used. Indeed, all of these problems can be remedied by appropriately
conditioning the function $y$. For instance, by adding a non-zero
constant to $y$ or multiplying $y$ with a non-zero constant,
(\ref{6}) through (\ref{15}) for the pre-conditioned $y$ remain
valid and well-behaved.

\subsubsection{FDD Approximation}

The $S$-variate FDD approximation $\hat{y}_{S}(\mathbf{X})$, say,
of $y(\mathbf{X})$, where $0\le S<N$, is obtained by truncating
the right side of (\ref{6}) at $0\le|u|\le S$, yielding
\begin{equation}
\hat{y}_{S}(\mathbf{X})={\displaystyle {\displaystyle \prod_{{\textstyle {u\subseteq\{1,\cdots,N\}\atop 0\le|u|\le S}}}}\left[1+z_{u}(\mathbf{X}_{u})\right]}.\label{16}
\end{equation}
It is important to note that the right side of (\ref{16}) contains products of at most $S$-dimensional ANOVA component functions of $y$. Therefore, the term "$S$-variate" used for the FDD approximation should be interpreted in the context of including at most $S$-degree interaction of input variables, even though $\hat{y}_{S}(\mathbf{X})$ is not strictly an $S$-variate function.

Unlike (\ref{4}) and (\ref{5}), the mean and variance of $\hat{y}_{S}(\mathbf{X})$,
respectively defined as
\begin{equation}
\mathbb{E}\left[\hat{y}_{S}(\mathbf{X})\right]:=\int_{\mathbb{R}^{N}}\hat{y}_{S}(\mathbf{x})f_{\mathbf{X}}(\mathbf{x})d\mathbf{x}\label{17}
\end{equation}
and
\begin{equation}
\hat{\sigma}_{S}^{2}:=\mathbb{E}\left[\left(\hat{y}_{S}(\mathbf{X})-\mathbb{E}\left[\hat{y}_{S}(\mathbf{X})\right]\right)^{2}\right]:=\int_{\mathbb{R}^{N}}\left(\hat{y}_{S}(\mathbf{x})-\mathbb{E}\left[\hat{y}_{S}(\mathbf{X})\right]\right)^{2}f_{\mathbf{X}}(\mathbf{x})d\mathbf{x},\label{18}
\end{equation}
do not produce closed-form or analytic expressions in
terms of $y_{\emptyset}$ and $\sigma_{u}^{2}$ if $S$ is selected
arbitrarily. This is one drawback of FDD when compared with ADD.  Having said so,
they are easily estimated by sampling methods, such as quasi and crude Monte Carlo simulations, or even numerical integration if $N$ is
not overly large. Not wishing to preempt subsequent discussion, these two moments can be obtained in closed-form when $S=1$, that is, for the univariate
FDD approximation, to be explained in Section 5.

Given a truncation $S$, which approximation stemming from ADD and
FDD is more precise? The answer depends on the class of multivariate
function being approximated. Indeed, the relative advantage or disadvantage
of ADD and FDD should be judged by examining three special classes of
functions: purely multiplicative functions, purely additive functions,
and their mixtures, described as follows.

\subsection{Purely Multiplicative Functions}

Consider a function of pure multiplicative form
\begin{equation}
y(\mathbf{X})=\nu_{0}{\displaystyle {\displaystyle \prod_{i=1}^{N}h_{i}(X_{i})},}\label{19}
\end{equation}
where $\nu_{0}\in\mathbb{R}$ is a constant and $h_{i}:\mathbb{R}\to\mathbb{R}$
, $i=1,\cdots,N$, are square-integrable univariate functions with
\begin{equation}
\nu_{i}:=\mathbb{E}\left[h_{i}(X_{i})\right]:=\int_{\mathbb{R}}h_{i}(x_{i})f_{i}(x_{i})dx_{i},
\nonumber
\end{equation}
\begin{equation}
\delta_{i}^{2}:=\mathbb{E}\left[\left(h_{i}(X_{i})-\nu_{i}\right)^{2}\right]:=\int_{\mathbb{R}}\left(h_{i}(x_{i})-\nu_{i}\right)^{2}f_{i}(x_{i})dx_{i}
\nonumber
\end{equation}
denoting their first two moments. For a function $y$ satisfying (\ref{19}),
both ADD and FDD component functions and their respective variances
can be determined with little effort. Indeed, when (\ref{1b}) and (\ref{1c}) are used, the outcome is
\begin{equation}
y_{\emptyset}=1+z_{\emptyset}=\nu_{0}{\displaystyle {\displaystyle \prod_{i=1}^{N}\nu_{i}}},\label{22}
\end{equation}
\begin{equation}
y_{u}(\mathbf{X}_{u})=\nu_{0}\left({\displaystyle {\displaystyle \prod_{i\notin u}\nu_{i}}}\right){\displaystyle {\displaystyle \prod_{i\in u}\left[h_{i}(X_{i})-\nu_{i}\right]}},\;\emptyset\ne u\subseteq\{1,\cdots,N\},\label{23}
\end{equation}
whereas applying (\ref{22}) and (\ref{23}) to (\ref{7}) or (\ref{12}) results in
\begin{equation}
 1+z_{u}(\mathbf{X}_{u})=
  \begin{cases}
  {\displaystyle \frac{h_{i}(X_{i})}{\nu_{i}}} & \text{if}\: u=\{i\},\: i=1,\cdots,N,\\
   1       & \text{if } |u|\ge2.
  \end{cases}
\label{24}
\end{equation}
The ADD component functions lead to the $S$-variate variance
approximation
\begin{equation}
\tilde{\sigma}_{S}^{2}=\sum_{s=1}^{S}\:\sum_{{\textstyle {\emptyset\ne u\subseteq\{1,\cdots,N\}\atop |u|=s}}}\nu_{0}^{2}\left({\displaystyle {\displaystyle \prod_{i\notin u}\nu_{i}^{2}}}\right){\displaystyle {\displaystyle \prod_{i\in u}\delta_{i}^{2}}}\label{25}
\end{equation}
that approaches the exact variance
\begin{equation}
\sigma^{2}=\nu_{0}^{2}\left[{\displaystyle {\displaystyle \prod_{i=1}^{N}\left(\delta_{i}^{2}+\nu_{i}^{2}\right)}}-{\displaystyle {\displaystyle \prod_{i=1}^{N}\nu_{i}^{2}}}\right]\label{26}
\end{equation}
of $y$ as $S\to N$. In contrast, the FDD component functions, due
to their multiplicative structure, yield the exact variance, that is,
$\hat{\sigma}_{S}^{2}=\sigma^{2}$ for any $1\le S\le N$. The implication is that the univariate truncation of FDD is adequate for calculating
the variance or any other probabilistic characteristics of a purely multiplicative function $y$. This
is because such a function is exactly reproduced
by its univariate FDD approximation. The same does not hold true
for an ADD approximation, requiring, therefore, higher-variate truncations to
approximate $y$ progressively accurately. Clearly, FDD is more relevant
than ADD for the multiplicative class of functions.

\subsection{Purely Additive Functions}

Now consider a function of purely additive form
\begin{equation}
y(\mathbf{X})=\mu_{0}+\sum_{i=1}^{N}g_{i}(X_{i}).\label{29}
\end{equation}
Here $\mu_{0}\in\mathbb{R}$ is another constant and $g_{i}:\mathbb{R}\to\mathbb{R}$
, $i=1,\cdots,N$, are square-integrable univariate functions with
\begin{equation}
\mu_{i}:=\mathbb{E}\left[g_{i}(X_{i})\right]:=\int_{\mathbb{R}}g_{i}(x_{i})f_{i}(x_{i})dx_{i},
\nonumber
\end{equation}
\begin{equation}
\lambda_{i}^{2}:=\mathbb{E}\left[\left(g_{i}(X_{i})-\mu_{i}\right)^{2}\right]:=\int_{\mathbb{R}}\left(g_{i}(x_{i})-\mu_{i}\right)^{2}f_{i}(x_{i})dx_{i}
\nonumber
\end{equation}
designating their first two moments. Employing (\ref{1a})-(\ref{1c})
and (\ref{7}), the ADD and FDD component functions of $y$ in (\ref{29}) are
\begin{equation}
y_{\emptyset}=1+z_{\emptyset}=\mu_{0}+\sum_{i=1}^{N}\mu_{i},\label{30}
\end{equation}
\begin{equation}
 y_{u}(\mathbf{X}_{u})=
  \begin{cases}
  {\displaystyle g_{i}(X_{i})-\mu_{i}} & \text{if}\: u=\{i\},\: i=1,\cdots,N,\\
   0                                           & \text{if } |u|\ge2,
  \end{cases}
\label{31}
\end{equation}
and
\begin{equation}
1+z_{u}(\mathbf{X}_{u})=\dfrac{\mu_{0}+ \left( {\displaystyle \sum_{i=1}^{N}\mu_{i}} \right) +
{\displaystyle \sum_{i\in u}\left[g_{i}(X_{i})-\mu_{i}\right]}}
{{\displaystyle \prod_{v\subset u}\left[1+z_{v}(\mathbf{X}_{v})\right]}},\label{32}
\end{equation}
with the non-constant FDD component functions expressed recursively.
In this case, the variance due to an $S$-variate ADD approximation
yields the exact variance
\begin{equation}
\tilde{\sigma}_{S}^{2}=\sigma^{2}=\sum_{i=1}^{N}\lambda_{i}^{2}\label{33}
\end{equation}
for any $1\le S\le N$. However, due to the complicated form of (\ref{32}),
an explicit formula for the variance of a generic $S$-variate FDD
approximation is not possible. But given a truncation $S$, the corresponding
variance can be estimated by sampling methods or numerical integration.
Nonetheless, if a function has a purely additive structure, the univariate
ADD approximation exactly reproduces that function, thereby needing
at most the univariate truncation of ADD. In contrast, FDD will now require
higher-variate truncations for rendering gradually accurate approximations.
For purely additive functions, ADD is, therefore, more appropriate
than FDD, turning the table on the latter decomposition.

\subsection{Mixtures of Multiplicative and Additive Functions}

Finally, consider a blended function of the form
\begin{equation}
y(\mathbf{X})=\nu_{0}\left( {\displaystyle {\displaystyle \prod_{i=1}^{N}h_{i}(X_{i})}}\right) +\mu_{0}+\sum_{i=1}^{N}g_{i}(X_{i}),\label{32b}
\end{equation}
built on adding the purely multiplicative and purely additive pieces from (\ref{19}) and (\ref{29}). Depending on relative orders of these constituents, $y$ can be dominantly multiplicative or dominantly additive or neither.
Regardless, (\ref{1a})-(\ref{1c}) result in its ADD component functions
\begin{equation}
y_{\emptyset}=\nu_{0}\left( {\displaystyle {\displaystyle \prod_{i=1}^{N}\nu_{i}}}\right) +\mu_{0}+\sum_{i=1}^{N}\mu_{i},\label{32c}
\end{equation}
\begin{equation}
 y_{u}(\mathbf{X}_{u})=
  \begin{cases}
  {\displaystyle \nu_{0}\left( {\displaystyle {\displaystyle \prod_{j=1,j\neq i}^{N}\nu_{j}}}\right) {\displaystyle {\displaystyle \left[h_{i}(X_{i})-\nu_{i}\right]}+g_{i}(X_{i})-\mu_{i}}} & \text{if } u=\{i\}, i=1,\cdots,N,\\
\nu_{0}\left( {\displaystyle {\displaystyle \prod_{i\notin u}\nu_{i}}}\right) {\displaystyle {\displaystyle \prod_{i\in u}\left[h_{i}(X_{i})-\nu_{i}\right]}} & \text{if }|u|\ge2,
  \end{cases}
\label{32d}
\end{equation}
lending themselves to calculate the variance
\begin{equation}
 \tilde{\sigma}_{S}^{2}=
  \begin{cases}
 {\displaystyle \sum_{i=1}^{N}}\left[\nu_{0}^{2}\left( {\displaystyle {\displaystyle \prod_{j=1,j\neq i}^{N}\nu_{j}^{2}}}\right) {\displaystyle {\displaystyle \delta_{i}^{2}}}+\lambda_{i}^{2}+2\nu_{0}\left( {\displaystyle {\displaystyle \prod_{j=1,j\neq i}^{N}\nu_{j}}}\right) {\displaystyle {\displaystyle \eta_{i}^{2}}}\right] & \text{if } S=1,\\
\tilde{\sigma}_{1}^{2}+{\displaystyle \sum_{s=2}^{S}\:\sum_{{\textstyle {\emptyset\ne u\subseteq\{1,\cdots,N\}\atop |u|=s}}}}\nu_{0}^{2}\left( {\displaystyle {\displaystyle \prod_{i\notin u}\nu_{i}^{2}}}\right) {\displaystyle {\displaystyle \prod_{i\in u}\delta_{i}^{2}}} & \text{if } S\ge2,
  \end{cases}
\label{32e}
\end{equation}
of an $S$-variate ADD approximation, where
\begin{equation}
\eta_i^2:=\mathbb{E}\left[\left(h_{i}(X_{i})-\nu_{i}\right)\left(g_{i}(X_{i})-\mu_{i}\right)\right]:=
\int_{\mathbb{R}}\left(h_{i}(x_{i})-\nu_{i}\right)\left(g_{i}(x_{i})-\mu_{i}\right)f_{i}(x_{i})dx_{i}
\nonumber
\end{equation}
is the covariance between  $h_i(X_i)$ and $g_i(X_i)$. The approximate variance approaches the exact variance
\begin{equation}
\sigma^{2} = \nu_{0}^{2}\left[{\displaystyle {\displaystyle \prod_{i=1}^{N}\left(\delta_{i}^{2}+\nu_{i}^{2}\right)}}-{\displaystyle {\displaystyle \prod_{i=1}^{N}\nu_{i}^{2}}}\right]+\sum_{i=1}^{N}\lambda_{i}^{2}
+ 2\nu_{0}\left({\displaystyle {\displaystyle \prod_{i=1}^{N}\nu_{i}}}\right)
\sum_{i=1}^{N} {\displaystyle \frac{\eta_{i}^{2}}{\nu_{i}}},
\label{32f}
\end{equation}
where the first two terms are the variances of the purely multiplicative
and purely additive pieces, while the third term represents the covariance
between these two pieces. The formulae for $y_{u}(\mathbf{X}_{u})$,
$\tilde{\sigma}_{S}^{2}$, and $\sigma^{2}$ given in (\ref{32c}) through
(\ref{32f}) are slightly general, shrinking to
(\ref{22}) through (\ref{26}) for purely multiplicative
functions and to (\ref{30}) through (\ref{33}) for purely additive functions,
as expected. Although the FDD component functions can be easily derived
as per (\ref{7}), again, an explicit formula for the variance $\hat{\sigma}_{S}^{2}$
of FDD approximation remains impalpable for a generic truncation $S$. Unlike the two pure function classes discussed in preceding subsections, it is not obvious which approximation between ADD and FDD is better for this mixed class of functions.

\subsection{Example 1}

Consider two functions,
\begin{equation}
y_{1}=\nu_{0}{\displaystyle {\displaystyle \prod_{i=1}^{N}X_{i}}}\;\text{and}\; y_{2}=\mu_{0}+\sum_{i=1}^{N}X_{i},\label{34}
\end{equation}
endowed with a purely multiplicative and a purely additive structure,
respectively, where $X_{i},$ $i=1,\cdots,N$, are $N$ independent
and identically distributed random variables, each following uniform
distribution over {[}0,1{]}, $\nu_{0}=100$, and $\mu_{0}=0$. The
mean and variance of $y_{1}(\mathbf{X})$ are $\nu_{0}/2^{N}$ and
$\nu_{0}^{2}/3^{N}-\nu_{0}^{2}/2^{2N}$, respectively, and of $y_{2}(\mathbf{X})$
are $\mu_{0}+N/2$ and $N/12$, respectively. The ADD and FDD component
functions of $y_{1}$ or $y_{2}$ were obtained using (\ref{22})
through (\ref{24}) or (\ref{30}) through (\ref{32}). The means
and variances of ADD approximations were calculated exactly using
(\ref{22}) or (\ref{30}) and (\ref{25}) or (\ref{33}), whereas
the means and variances of FDD approximations {[}(\ref{17}), (\ref{18}){]}
were estimated by a fully symmetric multidimensional integration rule
with nine generators \cite{genz83}. The purpose of this example is
to compare the variances of both functions obtained using various
ADD and FDD approximations.

Tables \ref{table1} and \ref{table2} respectively present the mean-squared errors by univariate ($S=1$) to decavariate ($S=10$) truncations of ADD for $y_{1}$
and by univariate ($S=1$) to pentavariate ($S=5$) truncations of
FDD for $y_{2}$ when $N=6,7,8,9,10$. The error is defined as the
absolute difference between the exact ($\sigma^{2}$) and approximate
($\tilde{\sigma}_{S}^{2}$ or $\hat{\sigma}_{S}^{2}$) variances, divided by the exact variance.
Given a problem size ($N$), the errors decay with increasing $S$
for both functions as expected, although the ADD approximations for
a fixed $S$ worsen when $N$ grows larger. This implies that
an ADD approximation becomes less precise for high-dimensional multiplicative
functions. No such degradations are observed for FDD approximations of high-dimensional additive functions.  Both errors vanish altogether when $S=N=10$, as the decavariate
ADD and FDD approximations coincide with $y_{1}$ and $y_{2}$, respectively.
Due to the purely multiplicative and purely additive functional forms
examined, their univariate FDD and univariate ADD approximations are
the same as $y_{1}$ and $y_{2}$, producing respective variances
exactly.

\begin{table}
\caption{ADD approximation errors in variances of $y_{1}$ in Example 1$^{(\text{a})}$ }
\begin{centering}
\begin{tabular}{cccccc}
\hline
 & \multicolumn{5}{c}{{\footnotesize $N$}}\tabularnewline
\cline{2-6}
{\footnotesize $S$} & {\footnotesize 6} & {\footnotesize 7} & {\footnotesize 8} & {\footnotesize 9} & {\footnotesize 10}\tabularnewline
\hline
{\footnotesize 1} & {\footnotesize 0.566974} & {\footnotesize 0.640558} & {\footnotesize 0.703332} & {\footnotesize 0.75646} & {\footnotesize 0.801087}\tabularnewline
{\footnotesize 2} & {\footnotesize 0.206118} & {\footnotesize 0.281116} & {\footnotesize 0.357219} & {\footnotesize 0.43174} & {\footnotesize 0.502717}\tabularnewline
{\footnotesize 3} & {\footnotesize $4.5738\times10^{-2}$} & {\footnotesize $8.1426\times10^{-2}$} & {\footnotesize 0.126477} & {\footnotesize 0.179179} & {\footnotesize 0.237499}\tabularnewline
{\footnotesize 4} & {\footnotesize $5.6430\times10^{-3}$} & {\footnotesize $1.4862\times10^{-2}$} & {\footnotesize $3.0335\times10^{-2}$} & {\footnotesize $5.2899\times10^{-2}$} & {\footnotesize $8.2789\times10^{-2}$}\tabularnewline
{\footnotesize 5} & {\footnotesize $2.9700\times10^{-4}$} & {\footnotesize $1.5496\times10^{-3}$} & {\footnotesize $4.6969\times10^{-3}$} & {\footnotesize $1.0806\times10^{-2}$} & {\footnotesize $2.0905\times10^{-2}$}\tabularnewline
{\footnotesize 6} & {\footnotesize 0} & {\footnotesize $7.0437\times10^{-5}$} & {\footnotesize $4.2391\times10^{-4}$} & {\footnotesize $1.4518\times10^{-3}$} & {\footnotesize $3.7149\times10^{-3}$}\tabularnewline
{\footnotesize 7} & {\footnotesize $-^{(\text{b})}$} & {\footnotesize 0} & {\footnotesize $1.6956\times10^{-5}$} & {\footnotesize $1.1548\times10^{-4}$} & {\footnotesize $4.4062\times10^{-4}$}\tabularnewline
{\footnotesize 8} & {\footnotesize $-^{(\text{b})}$} & {\footnotesize $-^{(\text{b})}$} & {\footnotesize 0} & {\footnotesize $4.1244\times10^{-6}$} & {\footnotesize $3.1328\times10^{-5}$}\tabularnewline
{\footnotesize 9} & {\footnotesize $-^{(\text{b})}$} & {\footnotesize $-^{(\text{b})}$} & {\footnotesize $-^{(\text{b})}$} & {\footnotesize 0} & {\footnotesize $1.0106\times10^{-6}$}\tabularnewline
{\footnotesize 10} & {\footnotesize $-^{(\text{b})}$} & {\footnotesize $-^{(\text{b})}$} & {\footnotesize $-^{(\text{b})}$} & {\footnotesize $-^{(\text{b})}$} & {\footnotesize 0}\tabularnewline
\hline
\end{tabular}
\par\end{centering}
{\footnotesize ~~~~~~(a) The error is defined as the absolute
difference between exact and approximate variances,}{\footnotesize \par}
{\footnotesize ~~~~~~~~~~~divided by the exact variance.}{\footnotesize \par}
{\footnotesize ~~~~~~(b) Not applicable.}
\label{table1}
\end{table}

\begin{table}
\caption{FDD approximation errors in variances of $y_{2}$ in Example 1$^{(\text{a})}$ }
\begin{centering}
\begin{tabular}{cccccc}
\hline
 & \multicolumn{5}{c}{{\footnotesize $N$}}\tabularnewline
\cline{2-6}
{\footnotesize $S$} & {\footnotesize 6} & {\footnotesize 7} & {\footnotesize 8} & {\footnotesize 9} & {\footnotesize 10}\tabularnewline
\hline
{\footnotesize 1} & {\footnotesize $2.3436\times10^{-2}$} & {\footnotesize $2.0641\times10^{-2}$} & {\footnotesize $1.8420\times10^{-2}$} & {\footnotesize $1.6620\times10^{-2}$} & {\footnotesize $1.5134\times10^{-2}$}\tabularnewline
{\footnotesize 2} & {\footnotesize $1.1620\times10^{-3}$} & {\footnotesize $9.2133\times10^{-4}$} & {\footnotesize $7.4728\times10^{-4}$} & {\footnotesize $6.1788\times10^{-4}$} & {\footnotesize $5.1923\times10^{-4}$}\tabularnewline
{\footnotesize 3} & {\footnotesize $1.0918\times10^{-4}$} & {\footnotesize $7.8170\times10^{-5}$} & {\footnotesize $5.5198\times10^{-5}$} & {\footnotesize $4.2094\times10^{-5}$} & {\footnotesize $3.2728\times10^{-5}$}\tabularnewline
{\footnotesize 4} & {\footnotesize $1.7728\times10^{-5}$} & {\footnotesize $1.3099\times10^{-5}$} & {\footnotesize $9.3640\times10^{-6}$} & {\footnotesize $6.7126\times10^{-6}$} & {\footnotesize $4.8768\times10^{-6}$}\tabularnewline
{\footnotesize 5} & {\footnotesize $2.5480\times10^{-6}$} & {\footnotesize $2.5579\times10^{-6}$} & {\footnotesize $1.9704\times10^{-6}$} & {\footnotesize $1.4152\times10^{-6}$} & {\footnotesize $9.9971\times10^{-7}$}\tabularnewline
\hline
\end{tabular}
\par\end{centering}
{\footnotesize ~~~~~~(a) The error is defined as the absolute
difference between exact and approximate variances,}{\footnotesize \par}
{\footnotesize ~~~~~~~~~~~divided by the exact variance.}
\label{table2}
\end{table}

\section{Hybrid Dimensional Decomposition}

When a desired stochastic response exhibits neither a dominantly additive
nor a dominantly multiplicative nature, then a mixed approach that
optimally combines ADD and FDD approximations is needed. Three hybrid
approximations, employing a nonlinear and two linear mixtures of ADD and
FDD, are proposed.

\subsection{Hybrid Approximations}

Given $S$-variate ADD and FDD approximations $\tilde{y}_{S}(\mathbf{X})$
and $\hat{y}_{S}(\mathbf{X})$, let
\begin{equation}
 \bar{y}_{S}(\mathbf{X};\alpha_{S},\beta_{S},\gamma_{S},\cdots):=
  \begin{cases}
  y_{\emptyset}                                                                               & \text{if } S=0,         \\
  h(\tilde{y}_{S}(\mathbf{X}),\hat{y}_{S}(\mathbf{X});\alpha_{S},\beta_{S},\gamma_{S},\cdots) & \text{if } 1 \le S < N, \\
   y(\mathbf{X})                                                                              & \text{if } S=N,
  \end{cases}
\nonumber
\end{equation}
define a general, $S$-variate hybrid approximation of $y(\mathbf{X})$,
where $h$ is a chosen linear or nonlinear model function such that
$\mathbb{E}[\bar{y}_{S}(\mathbf{X};\alpha_{S},\beta_{S},\gamma_{S},\cdots)]=y_{\emptyset}$
and $\alpha_{S},\beta_{S},\gamma_{S},\cdots$ are the associated model parameters. With no loss of generality, the \emph{zero}-mean functions, defined by
$w(\mathbf{X}):=y(\mathbf{X})-y_{\emptyset}$, $\tilde{w}_{S}(\mathbf{X}):=
\tilde{y}_{S}(\mathbf{X})-y_{\emptyset}$,
$\hat{w}_{S}(\mathbf{X}):=\hat{y}_{S}(\mathbf{X})-\mathbb{E}[\hat{y}_{S}(\mathbf{X})]$,
and $\bar{w}_{S}(\mathbf{X};\alpha_{S},\beta_{S},\gamma_{S},\cdots):=
\bar{y}_{S}(\mathbf{X};\alpha_{S},\beta_{S},\gamma_{S},\cdots)-y_{\emptyset}$,
will be used for the remainder of this section.  It is important to note that the
expectation of the univariate FDD approximation $\hat{y}_{1}(\mathbf{X})$
is ${\displaystyle y_{\emptyset}}$, which is also the mean of $y(\mathbf{X})$.
Since a higher-variate approximation cannot be worse than a lower-variate
approximation, one may conjecture, in absence of a rigorous proof,
that the bivariate and higher-variate FDD approximations also yield
the exact mean. However, such conjecture is neither required nor used for hybrid approximations developed here.  Theorem \ref{thm:2}
and Corollaries \ref{cor:3} and \ref{cor:4} describe three optimal
hybrid approximations $\bar{y}_{S,n}(\mathbf{X};\alpha_{S,n},\beta_{S,n},\gamma_{S,n})$,
$\bar{y}_{S,l}(\mathbf{X};\alpha_{S,l},\beta_{S,l})$, and $\bar{y}_{S,l'}(\mathbf{X};\alpha_{S,l'})$ for $1 \le S < N$, each producing the exact mean $y_{\emptyset}$, where the subscripts $n$, $l$, and $l'$ refer to a nonlinear and two linear models examined in this work.  Their zero-mean counterparts are defined as $\bar{w}_{S,n}(\mathbf{X};\alpha_{S,n},\beta_{S,n},\gamma_{S,n}):=\bar{y}_{S,n}(\mathbf{X};\alpha_{S,n},\beta_{S,n},\gamma_{S,n})-y_{\emptyset}$,
$\bar{w}_{S,l}(\mathbf{X};\alpha_{S,l},\beta_{S,l}):=\bar{y}_{S,l}(\mathbf{X};\alpha_{S,l},\beta_{S,l})-y_{\emptyset}$,
and $\bar{w}_{S,l'}(\mathbf{X};\alpha_{S,l'}):=\bar{y}_{S,l'}(\mathbf{X};\alpha_{S,l'})-y_{\emptyset}$,
respectively.

\begin{theorem}
\label{thm:2} Given an integer $1\le S<N<\infty$, let $\tilde{w}_{S}(\mathbf{X})$
and $\hat{w}_{S}(\mathbf{X})$ represent zero-mean, $S$-variate ADD
and FDD approximations with variances $\tilde{\sigma}_{S}^{2}:=\mathbb{E}[\tilde{y}_{S}(\mathbf{X})-y_{\emptyset}]^{2}=\mathbb{E}[\tilde{w}_{S}^{2}(\mathbf{X})]$
and $\hat{\sigma}_{S}^{2}:=\mathbb{E}[\hat{y}_{S}(\mathbf{X})-\mathbb{E}[\hat{y}_{S}(\mathbf{X})]]^{2}=\mathbb{E}[\hat{w}_{S}^{2}(\mathbf{X})]$,
respectively, of a real-valued, zero-mean, square-integrable function
$w(\mathbf{X})$. Then an optimal, nonlinear, $S$-variate
hybrid approximation of $w(\mathbf{X})$, if it exists, is
\begin{equation}
\begin{array}{rcl}
\bar{w}_{S,n}(\mathbf{X};\alpha_{S,n},\beta_{S,n},\gamma_{S,n}) & = & \alpha_{S,n}\tilde{w}_{S}(\mathbf{X})+\beta_{S,n}\hat{w}_{S}(\mathbf{X})\\
                                                                &   &
+\gamma_{S,n}\left[\tilde{w}_{S}(\mathbf{X})\hat{w}_{S}(\mathbf{X})-\mathbb{E}\left\{ \tilde{w}_{S}(\mathbf{X})\hat{w}_{S}(\mathbf{X})\right\} \right],
\end{array}
\label{36}
\end{equation}
where $(\alpha_{S,n},\beta_{S,n},\gamma_{S,n})\in\mathbb{R}^{3}$
is the solution of
\begin{equation}
\begin{array}{l}
\left[\!\!\begin{array}{ccc}
\tilde{\sigma}_{S}^{2} & \mathbb{E}\left[\tilde{w}_{S}(\mathbf{X})\hat{w}_{S}(\mathbf{X})\right] & \mathbb{E}\left[\tilde{w}_{S}^{2}(\mathbf{X})\hat{w}_{S}(\mathbf{X})\right]\\
 & \hat{\sigma}_{S}^{2} & \mathbb{E}\left[\tilde{w}_{S}(\mathbf{X})\hat{w}_{S}^{2}(\mathbf{X})\right]\\
(\text{sym.}) &  & \mathbb{E}\left[\tilde{w}_{S}(\mathbf{X})\hat{w}_{S}(\mathbf{X})-\mathbb{E}\left\{ \tilde{w}_{S}(\mathbf{X})\hat{w}_{S}(\mathbf{X})\right\} \right]^{2}
\end{array}\!\!\right]\left\{ \!\!\!\begin{array}{c}
\alpha_{S,n}\\
\beta_{S,n}\\
\gamma_{S,n}
\end{array}\!\!\!\right\} \\
\\
=\left\{ \begin{array}{c}
\tilde{\sigma}_{S}^{2}\\
\mathbb{E}\left[w(\mathbf{X})\hat{w}_{S}(\mathbf{X})\right]\\
\mathbb{E}\left[w(\mathbf{X})\tilde{w}_{S}(\mathbf{X})\hat{w}_{S}(\mathbf{X})\right]
\end{array}\right\}.
\end{array}\label{36b}
\end{equation}
\end{theorem}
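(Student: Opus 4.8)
The plan is to recognize (\ref{36b}) as the normal equations of a least-squares problem: among all approximations of the form (\ref{36}), the optimal one minimizes the mean-squared error $e_{S,n}(\alpha_{S,n},\beta_{S,n},\gamma_{S,n}):=\mathbb{E}[(w(\mathbf{X})-\bar{w}_{S,n}(\mathbf{X};\alpha_{S,n},\beta_{S,n},\gamma_{S,n}))^{2}]$ over $(\alpha_{S,n},\beta_{S,n},\gamma_{S,n})\in\mathbb{R}^{3}$. Writing $a:=\tilde{w}_{S}(\mathbf{X})$, $b:=\hat{w}_{S}(\mathbf{X})$, and $c:=\tilde{w}_{S}(\mathbf{X})\hat{w}_{S}(\mathbf{X})-\mathbb{E}[\tilde{w}_{S}(\mathbf{X})\hat{w}_{S}(\mathbf{X})]$, the candidate (\ref{36}) becomes $\bar{w}_{S,n}=\alpha_{S,n}a+\beta_{S,n}b+\gamma_{S,n}c$, a linear combination of three fixed elements of $\mathcal{L}_{2}(\Omega,\mathcal{F},P)$. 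The error $e_{S,n}$ is therefore a convex quadratic in the three parameters, and its minimizer, when it exists, is characterized by the first-order conditions $\partial e_{S,n}/\partial\alpha_{S,n}=\partial e_{S,n}/\partial\beta_{S,n}=\partial e_{S,n}/\partial\gamma_{S,n}=0$, equivalently by the orthogonality of the residual $w-\bar{w}_{S,n}$ to each of $a$, $b$, and $c$.

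The next step is to expand these three orthogonality conditions into a $3\times3$ linear system with symmetric Gram matrix $\mathbf{G}$ having entries $\mathbb{E}[a^{2}]$, $\mathbb{E}[ab]$, $\mathbb{E}[ac]$, $\mathbb{E}[b^{2}]$, $\mathbb{E}[bc]$, $\mathbb{E}[c^{2}]$ and right-hand side $\mathbf{r}=(\mathbb{E}[aw],\mathbb{E}[bw],\mathbb{E}[cw])^{T}$. Here I would use two elementary facts to match (\ref{36b}) exactly. First, both $a$ and $b$ are zero-mean: $\mathbb{E}[\tilde{w}_{S}]=0$ because the $S$-variate ADD approximation has mean $y_{\emptyset}$ by (\ref{4}), and $\mathbb{E}[\hat{w}_{S}]=0$ by the centering in its definition. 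Consequently $\mathbb{E}[ac]=\mathbb{E}[a^{2}b]=\mathbb{E}[\tilde{w}_{S}^{2}\hat{w}_{S}]$ and $\mathbb{E}[bc]=\mathbb{E}[ab^{2}]=\mathbb{E}[\tilde{w}_{S}\hat{w}_{S}^{2}]$, while $\mathbb{E}[a^{2}]=\tilde{\sigma}_{S}^{2}$ and $\mathbb{E}[b^{2}]=\hat{\sigma}_{S}^{2}$ by definition; the remaining diagonal entry $\mathbb{E}[c^{2}]$ is already in the centered form displayed in (\ref{36b}). Second, since $\mathbb{E}[w]=0$, the third right-side entry reduces to $\mathbb{E}[cw]=\mathbb{E}[w\tilde{w}_{S}\hat{w}_{S}]$, and the second is $\mathbb{E}[bw]=\mathbb{E}[w\hat{w}_{S}]$, exactly as stated.

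The one genuinely non-routine entry is the first component of $\mathbf{r}$, namely $\mathbb{E}[aw]=\mathbb{E}[\tilde{w}_{S}w]$, which (\ref{36b}) asserts equals $\tilde{\sigma}_{S}^{2}$ rather than a mixed moment. To establish this I would invoke the orthogonality of the ADD component functions from Proposition \ref{prop:2}: writing $w=\sum_{\emptyset\ne v\subseteq\{1,\cdots,N\}}y_{v}$ and $\tilde{w}_{S}=\sum_{1\le|u|\le S}y_{u}$, all cross terms $\mathbb{E}[y_{u}y_{v}]$ with $u\ne v$ vanish, leaving $\mathbb{E}[\tilde{w}_{S}w]=\sum_{1\le|u|\le S}\mathbb{E}[y_{u}^{2}]=\sum_{1\le|u|\le S}\sigma_{u}^{2}=\tilde{\sigma}_{S}^{2}$ by (\ref{5}). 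Substituting all six Gram entries and three right-side entries reproduces (\ref{36b}) verbatim.

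The main obstacle to flag is the existence question behind the phrase ``if it exists.'' The minimizer is unique precisely when $a$, $b$, $c$ are linearly independent in $\mathcal{L}_{2}$, so that $\mathbf{G}$ is positive definite and (\ref{36b}) has a unique solution. When they are linearly dependent $-$ for instance if $\hat{w}_{S}$ is a scalar multiple of $\tilde{w}_{S}$, as happens for the pure function classes where one decomposition is already exact $-$ the Gram matrix $\mathbf{G}$ is singular, and one must argue separately that a solution to the (consistent) normal equations still exists and furnishes a (no longer unique) minimizer. This degenerate possibility is exactly why the conclusion is phrased conditionally rather than asserting unconditional existence.
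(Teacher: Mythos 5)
Your proposal is correct and follows essentially the same route as the paper: both minimize the mean-squared error $\mathbb{E}[(w-\bar{w}_{S,n})^{2}]$ over the three parameters and read off the resulting normal equations, the paper doing so by differentiating under the expectation (with dominated-convergence regularity conditions) and you by the equivalent $\mathcal{L}_{2}$-orthogonality of the residual to $\tilde{w}_{S}$, $\hat{w}_{S}$, and the centered product. Your write-up is in fact slightly more complete, since you explicitly verify the one non-obvious entry $\mathbb{E}[w(\mathbf{X})\tilde{w}_{S}(\mathbf{X})]=\tilde{\sigma}_{S}^{2}$ via Proposition \ref{prop:2} and address the ``if it exists'' caveat through possible singularity of the Gram matrix, both of which the paper leaves implicit.
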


\begin{proof}
Define
\begin{equation}
\bar{e}_{S,n}:= \left[ w(\mathbf{X})-\bar{w}_{S,n}(\mathbf{X};\alpha_{S,n},\beta_{S,n},\gamma_{S,n})
\right]^2
\nonumber
\end{equation}
as the square of the difference between $w(\mathbf{X})$ and its $S$-variate hybrid approximation $\bar{w}_{S,n}(\mathbf{X};\alpha_{S,n},\beta_{S,n},\gamma_{S,n})$. For the mean-squared error of $\bar{w}_{S,n}$ to be minimum, set
\begin{equation}
\dfrac{\partial \mathbb{E} \left[ \bar{e}_{S,n} \right]}{\partial \alpha_{S,n}} =
\dfrac{\partial \mathbb{E} \left[ \bar{e}_{S,n} \right]}{\partial \beta_{S,n}} =
\dfrac{\partial \mathbb{E} \left[ \bar{e}_{S,n} \right]}{\partial \gamma_{S,n}} = 0.
\label{37}
\end{equation}

Let $\cal{A}$ be an open subset of $\mathbb{R}^3$. Suppose that $\bar{e}_{S,n}:\mathbb{R}^N\times \cal{A} \to \mathbb{R}$ satisfies the following regularity conditions: (1) $\bar{e}_{S,n}$ is a Lebesgue-integrable function of $\mathbf{X}$ for each $(\alpha_{S,n},\beta_{S,n},\gamma_{S,n})\in \cal{A}$; (2) for almost all $\mathbf{x} \in \mathbb{R}^N$, the partial derivatives ${\partial \bar{e}_{S,n}}/{\partial \alpha_{S,n}}$, ${\partial \bar{e}_{S,n}}/{\partial \beta_{S,n}}$, and ${\partial \bar{e}_{S,n}}/{\partial \gamma_{S,n}}$ exist for all $(\alpha_{S,n},\beta_{S,n},\gamma_{S,n})\in \cal{A}$; and (3) there exists an integrable function $\theta:\mathbb{R}^N \to \mathbb{R}$ such that
$|{\partial \bar{e}_{S,n}}/{\partial \alpha_{S,n}}|\le \theta(\mathbf{x})$,
$|{\partial \bar{e}_{S,n}}/{\partial \beta_{S,n}}|\le \theta(\mathbf{x})$, and
$|{\partial \bar{e}_{S,n}}/{\partial \gamma_{S,n}}|\le \theta(\mathbf{x})$ for all $(\alpha_{S,n},\beta_{S,n},\gamma_{S,n})\in \cal{A}$.  Then for all $(\alpha_{S,n},\beta_{S,n},\gamma_{S,n})\in \cal{A}$, the differential and expectation (integral) operators in (\ref{37}) can be interchanged, that is,
\begin{equation}
\mathbb{E} \left[ \dfrac{\partial \bar{e}_{S,n}}{\partial \alpha_{S,n}} \right] =
\mathbb{E} \left[ \dfrac{\partial \bar{e}_{S,n}}{\partial \beta_{S,n}} \right] =
\mathbb{E} \left[ \dfrac{\partial \bar{e}_{S,n}}{\partial \gamma_{S,n}} \right] = 0.
\label{38}
\end{equation}
Substituting the expression of $\bar{w}_{S,n}(\mathbf{X};\alpha_{S,n},\beta_{S,n},\gamma_{S,n})$
from (\ref{36}) into (\ref{38}) produces (\ref{36b}), proving the
theorem.~\end{proof}

\begin{corollary}
\label{cor:3}Neglecting the nonlinear term in (\ref{36}) through
(\ref{38}) creates an optimal, linear, $S$-variate hybrid approximation
\begin{equation}
\bar{w}_{S,l}(\mathbf{X};\alpha_{S,l},\beta_{S,l})=\alpha_{S,l}\tilde{w}_{S}(\mathbf{X})+\beta_{S,l}\hat{w}_{S}(\mathbf{X})\label{39}
\end{equation}
of $w(\mathbf{X})$, $1\le S<N<\infty$, where the optimal model parameters
\begin{equation}
\alpha_{S,l}=\dfrac{\hat{\sigma}_{S}^{2}\tilde{\sigma}_{S}^{2}-\mathbb{E}\left[\tilde{w}_{S}(\mathbf{X})\hat{w}_{S}(\mathbf{X})\right]\mathbb{E}\left[w(\mathbf{X})\hat{w}_{S}(\mathbf{X})\right]}{\tilde{\sigma}_{S}^{2}\hat{\sigma}_{S}^{2}-\left(\mathbb{E}\left[\tilde{w}_{S}(\mathbf{X})\hat{w}_{S}(\mathbf{X})\right]\right)^{2}}\label{40}
\end{equation}
and
\begin{equation}
\beta_{S,l}=\dfrac{\tilde{\sigma}_{S}^{2}\mathbb{E}\left[w(\mathbf{X})\hat{w}_{S}(\mathbf{X})\right]-\tilde{\sigma}_{S}^{2}\mathbb{E}\left[\tilde{w}_{S}(\mathbf{X})\hat{w}_{S}(\mathbf{X})\right]}{\tilde{\sigma}_{S}^{2}\hat{\sigma}_{S}^{2}-\left(\mathbb{E}\left[\tilde{w}_{S}(\mathbf{X})\hat{w}_{S}(\mathbf{X})\right]\right)^{2}}.\label{41}
\end{equation}
\end{corollary}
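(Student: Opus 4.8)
The plan is to reproduce the variational argument already used for Theorem \ref{thm:2}, but with the quadratic coefficient switched off. First I would set $\gamma_{S,n}=0$ in the nonlinear model (\ref{36}), which collapses it exactly to the two-parameter linear ansatz (\ref{39}). I would then define the pointwise squared error
\[
\bar{e}_{S,l}:=\left[w(\mathbf{X})-\alpha_{S,l}\tilde{w}_{S}(\mathbf{X})-\beta_{S,l}\hat{w}_{S}(\mathbf{X})\right]^{2}
\]
and minimize its expectation over $(\alpha_{S,l},\beta_{S,l})\in\mathbb{R}^{2}$ by imposing the two first-order conditions $\partial\mathbb{E}[\bar{e}_{S,l}]/\partial\alpha_{S,l}=\partial\mathbb{E}[\bar{e}_{S,l}]/\partial\beta_{S,l}=0$. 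Invoking the same dominated-convergence regularity conditions stated in the proof of Theorem \ref{thm:2}, I may interchange the differentiation and expectation operators, so these become $\mathbb{E}[\partial\bar{e}_{S,l}/\partial\alpha_{S,l}]=\mathbb{E}[\partial\bar{e}_{S,l}/\partial\beta_{S,l}]=0$.

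Carrying out the two differentiations and using $\mathbb{E}[\tilde{w}_{S}^{2}]=\tilde{\sigma}_{S}^{2}$ and $\mathbb{E}[\hat{w}_{S}^{2}]=\hat{\sigma}_{S}^{2}$ yields the normal equations $\alpha_{S,l}\tilde{\sigma}_{S}^{2}+\beta_{S,l}\mathbb{E}[\tilde{w}_{S}\hat{w}_{S}]=\mathbb{E}[w\tilde{w}_{S}]$ and $\alpha_{S,l}\mathbb{E}[\tilde{w}_{S}\hat{w}_{S}]+\beta_{S,l}\hat{\sigma}_{S}^{2}=\mathbb{E}[w\hat{w}_{S}]$. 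The one identity that does real work is $\mathbb{E}[w\tilde{w}_{S}]=\tilde{\sigma}_{S}^{2}$: since $w=\sum_{\emptyset\ne u}y_{u}$ and $\tilde{w}_{S}=\sum_{1\le|u|\le S}y_{u}$, the orthogonality of distinct ADD component functions (Proposition \ref{prop:2}) annihilates all cross terms, leaving $\sum_{1\le|u|\le S}\sigma_{u}^{2}=\tilde{\sigma}_{S}^{2}$. With this substitution the right-hand side of the first equation becomes $\tilde{\sigma}_{S}^{2}$, and the resulting $2\times2$ system is precisely the leading principal block of the $3\times3$ system (\ref{36b}) paired with its top two right-hand-side entries, confirming that dropping $\gamma_{S,n}$ is equivalent to deleting the third row and column.

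It then remains to solve this $2\times2$ system, which I would do by Cramer's rule: the determinant is $\tilde{\sigma}_{S}^{2}\hat{\sigma}_{S}^{2}-(\mathbb{E}[\tilde{w}_{S}\hat{w}_{S}])^{2}$, and the numerator cofactors reproduce (\ref{40}) and (\ref{41}) directly. Because this is a corollary obtained by restriction, there is no genuine obstacle; the only points meriting a word are the inheritance of the regularity hypotheses from Theorem \ref{thm:2} (so the interchange in the abbreviated display is legitimate) and the invertibility of the coefficient matrix. The latter reduces to the strictness of the Cauchy--Schwarz inequality $(\mathbb{E}[\tilde{w}_{S}\hat{w}_{S}])^{2}\le\tilde{\sigma}_{S}^{2}\hat{\sigma}_{S}^{2}$, which fails only when $\tilde{w}_{S}$ and $\hat{w}_{S}$ are linearly dependent; I would note that in all non-degenerate cases the determinant is positive, so the optimal $(\alpha_{S,l},\beta_{S,l})$ given by (\ref{40})--(\ref{41}) exists and is unique, completing the proof.
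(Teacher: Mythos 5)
Your proposal is correct and follows essentially the same route the paper intends: restrict the variational argument of Theorem \ref{thm:2} to the two-parameter linear ansatz (equivalently, delete the third row and column of the system (\ref{36b})), use $\mathbb{E}[w(\mathbf{X})\tilde{w}_{S}(\mathbf{X})]=\tilde{\sigma}_{S}^{2}$ from Propositions \ref{prop:1} and \ref{prop:2}, and solve the resulting $2\times 2$ normal equations, whose solution reproduces (\ref{40}) and (\ref{41}). Your added remark on invertibility via strict Cauchy--Schwarz matches the paper's own Remark 5 and is a welcome clarification rather than a deviation.
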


\begin{corollary}
\label{cor:4}Neglecting the nonlinear term and constraining the sum
of two remaining model parameters to be unity in (\ref{36}) through
(\ref{38}) creates yet another optimal, linear, $S$-variate hybrid
approximation
\begin{equation}
\bar{w}_{S,l'}(\mathbf{X};\alpha_{S,l'})=\alpha_{S,l'}\tilde{w}_{S}(\mathbf{X})+(1-\alpha_{S,l'})\hat{w}_{S}(\mathbf{X})\label{42}
\end{equation}
of $w(\mathbf{X})$, $1\le S<N<\infty$, where the optimal model parameter
\begin{equation}
\alpha_{S,l'}=\dfrac{\tilde{\sigma}_{S}^{2}+\hat{\sigma}_{S}^{2}-
\mathbb{E}\left[ \tilde{w}_{S}(\mathbf{X})\hat{w}_{S}(\mathbf{X})\right]-
\mathbb{E}\left[ w(\mathbf{X})\hat{w}_{S}(\mathbf{X})\right]}
{\tilde{\sigma}_{S}^{2}+\hat{\sigma}_{S}^{2}-
2\mathbb{E}\left[ \tilde{w}_{S}(\mathbf{X})\hat{w}_{S}(\mathbf{X})\right]}.
\label{43}
\end{equation}

\end{corollary}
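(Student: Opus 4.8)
The plan is to proceed exactly as in the proofs of Theorem \ref{thm:2} and Corollary \ref{cor:3}, but now with a single free parameter. First I would impose the two stated restrictions on the general nonlinear form (\ref{36}): set $\gamma_{S,l'}=0$ to discard the bilinear term, and enforce $\beta_{S,l'}=1-\alpha_{S,l'}$ so that the two surviving coefficients sum to unity. Substituting these into (\ref{36}) collapses the approximation to the one-parameter family (\ref{42}), and the residual becomes $w(\mathbf{X})-\bar{w}_{S,l'}(\mathbf{X};\alpha_{S,l'})=[w(\mathbf{X})-\hat{w}_{S}(\mathbf{X})]-\alpha_{S,l'}[\tilde{w}_{S}(\mathbf{X})-\hat{w}_{S}(\mathbf{X})]$.

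Next I would form the mean-squared error $\mathbb{E}[\bar{e}_{S,l'}]$ with $\bar{e}_{S,l'}:=[w(\mathbf{X})-\bar{w}_{S,l'}(\mathbf{X};\alpha_{S,l'})]^{2}$ and minimize it over the lone scalar $\alpha_{S,l'}$. Invoking the same three regularity conditions used in Theorem \ref{thm:2} to justify interchanging the derivative and the expectation, the stationarity condition $\mathbb{E}[\partial\bar{e}_{S,l'}/\partial\alpha_{S,l'}]=0$ reduces to the single orthogonality equation $\mathbb{E}[(w-\bar{w}_{S,l'})(\tilde{w}_{S}-\hat{w}_{S})]=0$. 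Because the model is affine in $\alpha_{S,l'}$, this is a scalar linear equation whose solution is $\alpha_{S,l'}=\mathbb{E}[(w-\hat{w}_{S})(\tilde{w}_{S}-\hat{w}_{S})]/\mathbb{E}[(\tilde{w}_{S}-\hat{w}_{S})^{2}]$.

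It then remains to expand the numerator and denominator into the moments appearing in (\ref{43}). Expanding the denominator gives $\mathbb{E}[(\tilde{w}_{S}-\hat{w}_{S})^{2}]=\tilde{\sigma}_{S}^{2}+\hat{\sigma}_{S}^{2}-2\mathbb{E}[\tilde{w}_{S}\hat{w}_{S}]$, matching the stated denominator; expanding the numerator gives $\mathbb{E}[w\tilde{w}_{S}]-\mathbb{E}[w\hat{w}_{S}]-\mathbb{E}[\tilde{w}_{S}\hat{w}_{S}]+\hat{\sigma}_{S}^{2}$. The one nontrivial simplification is the identity $\mathbb{E}[w(\mathbf{X})\tilde{w}_{S}(\mathbf{X})]=\tilde{\sigma}_{S}^{2}$, which I would establish exactly as in Theorem \ref{thm:2}: writing $w=\sum_{\emptyset\ne u}y_{u}$ and $\tilde{w}_{S}=\sum_{1\le|u|\le S}y_{u}$ and invoking the orthogonality of distinct ADD component functions (Proposition \ref{prop:2}) annihilates all cross terms and leaves $\sum_{1\le|u|\le S}\sigma_{u}^{2}=\tilde{\sigma}_{S}^{2}$. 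Substituting this identity converts the numerator to $\tilde{\sigma}_{S}^{2}+\hat{\sigma}_{S}^{2}-\mathbb{E}[\tilde{w}_{S}\hat{w}_{S}]-\mathbb{E}[w\hat{w}_{S}]$, yielding (\ref{43}).

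Since the problem is a genuine scalar least-squares fit, no serious obstacle arises; the derivation is essentially a one-line orthogonal projection onto the line spanned by $\tilde{w}_{S}-\hat{w}_{S}$. The only points requiring care are (i) reusing the measure-theoretic regularity hypotheses of Theorem \ref{thm:2} to legitimize differentiating under the integral sign, and (ii) noting that the formula presumes the denominator $\mathbb{E}[(\tilde{w}_{S}-\hat{w}_{S})^{2}]$ is nonzero, i.e.\ that $\tilde{w}_{S}$ and $\hat{w}_{S}$ do not coincide almost surely $-$ the same nondegeneracy implicitly underlying the ``if it exists'' qualifier in Theorem \ref{thm:2}.
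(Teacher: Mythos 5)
Your proposal is correct and follows essentially the same route the paper intends: substitute $\gamma_{S,l'}=0$ and $\beta_{S,l'}=1-\alpha_{S,l'}$ into the general form, minimize the mean-squared error over the single remaining parameter by differentiating under the expectation, and simplify using $\mathbb{E}[w(\mathbf{X})\tilde{w}_{S}(\mathbf{X})]=\tilde{\sigma}_{S}^{2}$, which follows from Propositions \ref{prop:1} and \ref{prop:2} exactly as you describe. Your added remarks on the nondegeneracy of the denominator and the reuse of the regularity hypotheses are consistent with the paper's ``if it exists'' qualification and require no further justification.
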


The second linear hybrid approximation $\bar{w}_{S,l'}$ for $S=1$ or 2 presented in Corollary \ref{cor:4} coincides with that proposed by Tunga and Demiralp \cite{tunga06}.  However, the results of first linear hybrid approximation $\bar{w}_{S,l}$ and nonlinear hybrid approximation $\bar{w}_{S,n}$ $-$ that is, Theorem \ref{thm:2} and Corollary \ref{cor:3} $-$ are new.  Furthermore, the two linear approximations, $\bar{w}_{S,l}$ and $\bar{w}_{S,l'}$, are not the same for a general truncation $2\le S<N$.  Indeed,
\begin{equation}
\begin{array}{ll}
\alpha_{S,l}-\alpha_{S,l'} = & \\
\!\!\!\dfrac{
\left(\hat{\sigma}_{S}^{2}-\mathbb{E}\left[ \tilde{w}_{S}(\mathbf{X})\hat{w}_{S}(\mathbf{X})\right]\right)
\left(\tilde{\sigma}_{S}^{2}-\mathbb{E}\left[\tilde{w}_{S}(\mathbf{X})\hat{w}_{S}(\mathbf{X})\right]\right)
\left(\mathbb{E}\left[\tilde{w}_{S}(\mathbf{X})\hat{w}_{S}(\mathbf{X})\right]-\mathbb{E}\left[w(\mathbf{X})\hat{w}_{S}(\mathbf{X})\right] \right)}
{
\left( \tilde{\sigma}_{S}^{2}+\hat{\sigma}_{S}^{2}-2\mathbb{E}\left[ \tilde{w}_{S}(\mathbf{X})\hat{w}_{S}(\mathbf{X})\right] \right)
\left( \tilde{\sigma}_{S}^{2}\hat{\sigma}_{S}^{2}-\left\{\mathbb{E}\left[\tilde{w}_{S}(\mathbf{X})\hat{w}_{S}(\mathbf{X})\right]\right\}^{2}\right)}
&
\end{array}
\nonumber
\end{equation}
does not vanish for arbitrary $S$.  Nor do the model parameters of the first linear hybrid approximation, which satisfy
\begin{equation}
\alpha_{S,l}+\beta_{S,l}= 1 -
\dfrac{
\left(\tilde{\sigma}_{S}^{2}-\mathbb{E}\left[\tilde{w}_{S}(\mathbf{X})\hat{w}_{S}(\mathbf{X})\right]\right)
\left(\mathbb{E}\left[\tilde{w}_{S}(\mathbf{X})\hat{w}_{S}(\mathbf{X})\right]-\mathbb{E}\left[w(\mathbf{X})\hat{w}_{S}(\mathbf{X})\right] \right)}
{
\tilde{\sigma}_{S}^{2}\hat{\sigma}_{S}^{2}-\left\{\mathbb{E}\left[\tilde{w}_{S}(\mathbf{X})\hat{w}_{S}(\mathbf{X})\right]\right\}^{2}},
\nonumber
\end{equation}
sum to unity, as required in the second linear hybrid approximation.  Equation \ref{39} is endowed with two independent parameters and hence more flexibility in forming an optimal approximation. Therefore, the first approximation is expected to produce more accurate results than the second approximation, to be illustrated in Example 2.

{\em Remark 4.} The nonlinear hybrid approximation $\bar{w}_{S,n}$ presented in Theorem \ref{thm:2} is motivated on furnishing more precise stochastic solutions than those by either version of the linear hybrid approximation. This is possible when all higher-order moments involved in (\ref{36b}) exist and the system matrix in (\ref{36b}) is invertible.  The need for nonlinear approximation become significant when only a univariate truncation is feasible, but not necessarily yielding sufficiently accurate statistical solutions by linear approximations.  The numerical results from linear and nonlinear approximations will be contrasted in Section 5.

{\em Remark 5.} Since $\mathbb{E}[\tilde{w}_{S}(\mathbf{X})\hat{w}_{S}(\mathbf{X})] \le \tilde{\sigma}_{S}^{2}\hat{\sigma}_{S}^{2}$ (Cauchy-Schwarz inequality), the sub-matrix formed by the first two rows and columns of the system matrix in (\ref{36b}) is invertible.  Therefore, the linear hybrid approximations exist for any function $y$ or $w$ with a finite variance.  Under appropriate conditions, the linear approximations,
$\bar{w}_{S,l}$ and $\bar{w}_{S,l'}$, for a given $1\le S<N<\infty$,
are capable of reproducing the exact solution: (1) If $w=\tilde{w}_{S}$,
then (\ref{40}), (\ref{41}), and (\ref{43}) yield $\alpha_{S,l}=\alpha_{S,l'}=1$
and $\beta_{S,l}=0$, and hence $\bar{w}_{S,l}=\bar{w}_{S,l'}=\tilde{w}_{S}=w$;
(2) If $w=\hat{w}_{S}$, then (\ref{40}), (\ref{41}), and (\ref{43})
produce $\alpha_{S,l}=\alpha_{S,l'}=0$ and $\beta_{S,l}=1$, and
therefore $\bar{w}_{S,l}=\bar{w}_{S,l'}=\hat{w}_{S}=w$. In Example
1, for instance, both functions $y_1$ and $y_2$ and their respective linear, univariate
HDD approximations are coincident, leading to exact variances by the
HDD approximation.

\subsection{Second-Moment Properties}

Applying the expectation operator on (\ref{36}), (\ref{39}), and (\ref{42}) yields the exact mean
\begin{equation}
\mathbb{E}\left[\bar{y}_{S,n}(\mathbf{X};\alpha_{S,n},\beta_{S,n},\gamma_{S,n})\right]=\mathbb{E}\left[\bar{y}_{S,l}(\mathbf{X};\alpha_{S,l},\beta_{S,l})\right]=\mathbb{E}\left[\bar{y}_{S,l'}(\mathbf{X};\alpha_{S,l'})\right]=y_{\emptyset},\label{44}
\end{equation}
 by all three hybrid approximations. However, their respective variances,
\begin{equation}
\begin{array}{rcl}
\bar{\sigma}_{S,n}^{2} & := & \mathbb{E}\left[\bar{w}_{S,n}^{2}(\mathbf{X};\alpha_{S,n},\beta_{S,n},\gamma_{S,n})\right]=\alpha_{S,n}^{2}\tilde{\sigma}_{S}^{2}+\beta_{S,n}^{2}\hat{\sigma}_{S}^{2}\\
 &  & +\gamma_{S,n}^{2}\mathbb{E}\left[\tilde{w}_{S}(\mathbf{X})\hat{w}_{S}(\mathbf{X})-\mathbb{E}\left\{ \tilde{w}_{S}(\mathbf{X})\hat{w}_{S}(\mathbf{X})\right\} \right]^{2}+2\alpha_{S,n}\beta_{S,n}\mathbb{E}\left[\tilde{w}_{S}(\mathbf{X})\hat{w}_{S}(\mathbf{X})\right]\\
 &  & +2\alpha_{S,n}\gamma_{S,n}\mathbb{E}\left[\tilde{w}_{S}^{2}(\mathbf{X})\hat{w}_{S}(\mathbf{X})\right]+2\beta_{S,n}\gamma_{S,n}\mathbb{E}\left[\tilde{w}_{S}(\mathbf{X})\hat{w}_{S}^{2}(\mathbf{X})\right],
\end{array}\label{45}
\end{equation}
\begin{equation}
\bar{\sigma}_{S,l}^{2} := \mathbb{E}\left[\bar{w}_{S,l}^{2}(\mathbf{X};\alpha_{S,l},\beta_{S,l})\right]
= \alpha_{S,l}^{2}\tilde{\sigma}_{S}^{2}+\beta_{S,l}^{2}\hat{\sigma}_{S}^{2}+2\alpha_{S,l}\beta_{S,l}\mathbb{E}\left[\tilde{w}_{S}(\mathbf{X})\hat{w}_{S}(\mathbf{X})\right],
\label{46}
\end{equation}
and
\begin{equation}
\begin{array}{rcl}
\bar{\sigma}_{S,l'}^{2} & := & \mathbb{E}\left[\bar{w}_{S,l'}^{2}(\mathbf{X};\alpha_{S,l'},\beta_{S,l'})\right]\\
 & = & \alpha_{S,l'}^{2}\tilde{\sigma}_{S}^{2}+(1-\alpha_{S,l'})^{2}\hat{\sigma}_{S}^{2}+2\alpha_{S,l'}(1-\alpha_{S,l'})\mathbb{E}\left[\tilde{w}_{S}(\mathbf{X})\hat{w}_{S}(\mathbf{X})\right],
\end{array}\label{47}
\end{equation}
while approximate, emerge steadily more accurate
as $S\to N$. Compared with ADD and FDD approximations, the HDD approximations proposed
require expectations of various products of $\tilde{w}_{S}(\mathbf{X})$
and $\hat{w}_{S}(\mathbf{X})$ to calculate their variances. They
simplify greatly for the univariate HDD approximation, to be discussed
in the following section.

\subsection{Example 2}

Consider the blended function
\begin{equation}
y = y_\emptyset + { \displaystyle \frac{
{\displaystyle \sum_{i=1}^{N}  X_{i}} -
\mathbb{E}\left[ {\displaystyle \sum_{i=1}^{N} X_{i}} \right]
                                      }
{
\sqrt{\mathbb{E}\left( {\displaystyle \sum_{i=1}^{N}  X_{i}} -
\mathbb{E}\left[ {\displaystyle \sum_{i=1}^{N} X_{i}} \right] \right)^2}
}
} +
{ \displaystyle \frac{
{\displaystyle \prod_{i=1}^{N}  X_{i}} -
\mathbb{E}\left[ {\displaystyle \prod_{i=1}^{N} X_{i}} \right]
                                      }
{
\sqrt{\mathbb{E}\left( {\displaystyle \prod_{i=1}^{N}  X_{i}} -
\mathbb{E}\left[ {\displaystyle \prod_{i=1}^{N} X_{i}} \right] \right)^2}
}
}
\label{48}
\end{equation}
of $N$ random variables, $X_{i},$ $i=1,\cdots,N$, which are independent,
identical, and uniformly distributed over {[}0,1{]}. From elementary
calculations, the mean and variance of $y(\mathbf{X})$ are $y_\emptyset$ and
$2+2\sqrt{N \times 3^{N-1}/(2^{2N}-3^N)}$, respectively. Since $y$ in (\ref{48}) follows the general structure of (\ref{32b}), all ADD component functions
{[}(\ref{32c}),{\ref{32d}}{]} and the resultant second-moment statistics {[}(\ref{32c}),(\ref{32e}){]} of an $S$-variate ADD approximation were obtained
exactly. However, the mean and variance of an $S$-variate FDD approximation
were estimated by a fully symmetric multidimensional integration rule
with nine generators \cite{genz83}. The variance of an $S$-variate
HDD approximation was obtained using (\ref{46}), where the expectation in the last term and those involved in the model
parameters {[}(\ref{40}), (\ref{41}){]} were calculated by the same integration rule. The objective of this example is to evaluate the accuracy of various ADD, FDD, and
HDD (both linear models) approximations in calculating several probabilistic
characteristics of $y$ for $N=5$ and $y_\emptyset=5$.

The means of FDD approximations for $S=1$, 2, 3, and 4, obtained using the aforementioned multidimensional rule and retaining five digits after the decimal point, are 5.00000, 5.00014, 5.00021, and 5.00002. They are practically equal to 5 -- the value of $y_{\emptyset}$ -- demonstrating that all four FDD approximations also produce the exact mean, at least in this example. However, a formal proof of this conjecture, applicable to FDD approximations of arbitrary functions, remains elusive.

Table \ref{table3} presents the errors in variances of $y$ calculated by univariate
($S=1$) to quadrivariate ($S=4$) truncations of ADD, FDD, and HDD.
The definition of the error is the same as in Example 1. Both ADD and FDD
commit smaller errors when the truncation $S$ increases, as expected.
Since the purely additive and purely multiplicative pieces of $y$
have been standardized, that is, they have zero means and unit variances,
the ADD and FDD errors have similar orders and trends. In contrast,
the HDD errors from the first linear approximation, given $1\le S \le 4$, are consistently lower than the corresponding ADD or FDD errors by almost an order of magnitude. However, the same observation does not hold true for the second linear hybrid approximation, which provides more precise results than ADD or FDD only for $S=1$ or 2, but the accuracy does not improve or even degrades for $S=3$ and $4$.  The superior accuracy of the first linear approximation is attributed to two independent model parameters, compared with a single model parameter of the second linear approximation.  Nonetheless, the hybrid approximation proposed is more accurate than ADD or FDD approximation, at least, for second-moment analysis.

\begin{table}
\caption{ADD, FDD, and HDD approximation errors in variances of $y$ in Example 2 $^{(\text{a})}$ }
\begin{centering}
\begin{tabular}{ccccc}
\hline
 & \multicolumn{4}{c}{{\footnotesize Approximation}}\tabularnewline
\cline{2-5}
{\footnotesize $S$} & {\footnotesize ADD} & {\footnotesize FDD} & {\footnotesize HDD (1st linear model)} & {\footnotesize HDD (2nd linear model)} \tabularnewline
\hline
{\footnotesize 1} & {\footnotesize 0.139942} & {\footnotesize $9.8251\times10^{-2}$} & {\footnotesize $2.2734\times10^{-2}$} & {\footnotesize $2.2734\times10^{-2}$} \tabularnewline
{\footnotesize 2} & {\footnotesize $3.9452\times10^{-2}$} & {\footnotesize $2.8834\times10^{-2}$} & {\footnotesize $1.5503\times10^{-3}$} & {\footnotesize $5.5528\times10^{-3}$}\tabularnewline
{\footnotesize 3} & {\footnotesize $5.9550\times10^{-3}$} & {\footnotesize $3.1717\times10^{-3}$} & {\footnotesize $1.8656\times10^{-4}$} & {\footnotesize $5.7923\times10^{-3}$}\tabularnewline
{\footnotesize 4} & {\footnotesize $3.7219\times10^{-4}$} & {\footnotesize $3.7008\times10^{-4}$} & {\footnotesize $1.4099\times10^{-5}$}& {\footnotesize $5.0992\times10^{-4}$} \tabularnewline
\hline
\end{tabular}
\par\end{centering}
{\footnotesize ~~~~(a)
The error is defined as the absolute difference between exact
and approximate}{\footnotesize \par}
{\footnotesize ~~~~~~~~~variances, divided by the exact variance.}{\footnotesize \par}
\label{table3}
\end{table}

Does the improvement of HDD approximation extend to other probabilistic
characteristics of $y$? Figures \ref{figure1} depict the probability density functions (PDFs) of $y$ and its univariate to quadrivariate
ADD, FDD, and HDD (first linear model) approximations. The PDFs, essentially the normalized
histograms, were estimated from $10^{6}$ Monte Carlo simulations
of $y$ and their various approximations. At univariate truncation
($S=1)$, neither ADD nor FDD provide acceptable results when compared
with the PDF of $y$. In contrast, the PDF derived from the univariate
or bivariate HDD approximation matches with the PDF of $y$ better
than the corresponding ADD or FDD approximation. The discrepancy between
the PDFs of $y$ and its approximation, whether ADD, FDD, or HDD,
diminishes as $S$ increases. At quadrivariate truncation ($S=4)$,
all four PDFs are practically the same, even at the tail regions. Any distinction, especially between the results of $y$ and its FDD or HDD approximation, is hardly visible in the naked eye.  Judging from the rates at which the PDFs of approximations are converging to the PDF of $y$,
the HDD approximation may also be beneficial to calculating the probability
distribution of a stochastic response. This in large measure depends on the smoothness properties of the function chosen. However, strictly speaking, there does not exist a formal proof yet that an HDD approximation will always lead to accurate calculation of probabilities.  The topic merits further study.

\begin{figure}[h]
\begin{centering}
\includegraphics[scale=0.63]{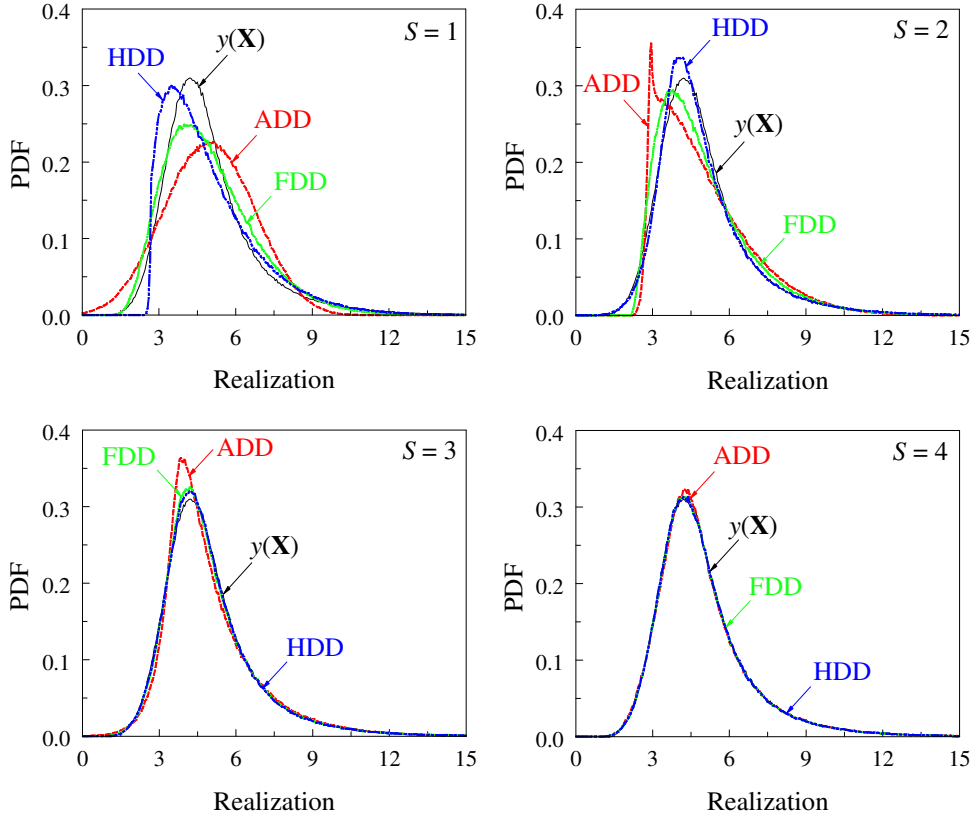}
\par\end{centering}
\caption{PDFs of $y$ and its ADD, FDD, and HDD approximations in Example 2; note: univariate (top, left); bivariate (top, right); trivariate (bottom, left); quadrivariate (bottom, right).}
\label{figure1}
\end{figure}

\subsection{Effective Dimensions}

When employing dimensional decomposition of a square-integrable function
$y$, an important decision revolves around selecting the truncation
parameter $S$. The truncation can be achieved by the notion of effective
dimension, introduced by Caflisch \emph{et al}. \cite{caflisch97},
who exploited ADD-based low effective dimension to explain why the
quasi Monte Carlo method outperforms the crude Monte Carlo algorithm for
evaluating a certain class of high-dimensional integrals. In this
subsection, a complete set of definitions, including two new alternative
ones, of effective dimension, stemming from ADD, FDD, and HDD are
presented.
\begin{definition}
\label{def:1}A square-integrable multivariate function $y$ with finite variance $0<\sigma^2<\infty$ has the ADD-based effective dimension $1\le S_{ADD}\le N$ if
\begin{equation}
S_{ADD}:=\min\left\{ S:1\le S\le N\;\text{such\; that\;}|\sigma^{2}-\tilde{\sigma}_{S}^{2}|\le(1-p)\sigma^{2}\right\} ,\label{49}
\end{equation}
the FDD-based effective dimension $1\le S_{FDD}\le N$ if
\begin{equation}
S_{FDD}:=\min\left\{ S:1\le S\le N\;\text{such\; that\;}|\sigma^{2}-\hat{\sigma}_{S}^{2}|\le(1-p)\sigma^{2}\right\} ,\label{50}
\end{equation}
and the HDD-based effective dimension $1\le S_{HDD}\le N$ if
\begin{equation}
S_{HDD}:=\min\left\{ S:1\le S\le N\;\text{such\; that\;}|\sigma^{2}-\bar{\sigma}_{S}^{2}|\le(1-p)\sigma^{2}\right\} ,\label{51}
\end{equation}
where $\bar{\sigma}_{S}^{2}:=\mathbb{E}[\bar{y}_{S}(\mathbf{X};\alpha_{S},\beta_{S},\gamma_{S},\cdots)-y_{\emptyset}]^{2}
= \mathbb{E}[\bar{w}_{S}^2]$ is the variance due to an $S$-variate (linear or nonlinear) HDD approximation of $w(\mathbf{X})$ and $0\le p\le1$ is a percentile threshold close
to \emph{one}.
\end{definition}

The definitions of effective dimension capture the notion in which $y$, represented by its ADD, FDD, or
HDD, is almost $S_{ADD}$-, $S_{FDD}$- or $S_{HDD}$-dimensional. The knowledge of effective dimension, whether rooted in ADD, FDD, or HDD, is valuable, because ANOVA terms up to the effective dimension contribute most to the function. Among these three definitions, the ADD approximation results in the
effective superposition dimension ($S_{ADD}$) of Caflisch \emph{et al}. $-$ a
concept ideally suited to a function comprising strongly additive
dimensional structure. In contrast, the effective dimension
$S_{FDD}$ proposed is appropriate when the dimensional
structure is strongly multiplicative. However, the dimensional hierarchy
of a multivariate function in general is neither dominantly additive
nor dominantly multiplicative.  Therefore, the effective dimension $S_{HDD}$
should be viewed as a generalized effective dimension that is applicable
to a broader class of functions than the ADD- and FDD-based effective
dimensions.  Furthermore, given a truncation $S$, an HDD approximation can be more precise than both ADD and FDD approximations.  In which case, the HDD-based effective dimension is expected to be lower than or equal to the ADD- or FDD-based effective dimensions.  It is, therefore, possible to optimally blend lower-variate ANOVA terms in HDD to contribute more to the function than by the same ANOVA terms in ADD or FDD alone.  See Example 3 for numerical results.

{\em Remark 6.} Caflisch \emph{et al}. used the 99th percentile for $p$, but it can be treated as a threshold parameter linked to the desired accuracy of a stochastic solution. They also introduced effective dimension in the truncation sense, referred to as effective truncation dimension, which is appropriate when some variables are more important than others in an ordered set of the additive decomposition. It is possible to define similar effective dimensions grounded on FDD and HDD approximations as well.  In contrast, the effective superposition dimensions in (\ref{49}), (\ref{50}), and (\ref{51}) determine whether the low-variate component functions of dimensional decomposition dominate the function and are appropriate when all variables are equally important. For truly high-dimensional problems, all variables contribute to a function value; therefore, the effective superposition dimensions examined are more useful than the effective truncation dimension.

\subsection{Example 3}

Consider the functions $y_1$ and $y_2$ described by (\ref{34}) in Example 1 and $y$ described by (\ref{48}) in Example 2.  The purpose of this example is to compare the effective dimensions of these three functions, stemming from various dimensional decompositions and demonstrate the benefit of the HDD-based effective dimension.

Table \ref{table4} enumerates $S_{ADD}$, $S_{FDD}$, and $S_{HDD}$ of $y_1$ and $y_2$ for $N=6,7,8,9,10$ and of $y$ for $N=5$ when $p=0.99$.  They are easily obtained from respective definitions of effective dimensions given by (\ref{49}), (\ref{50}), and (\ref{51}), and the second-moment errors listed in Tables \ref{table1} and \ref{table2}.  For all functions or problem sizes examined, HDD leads to least effective dimensions. This is because the variance of a multivariate function, given the largest dimension of ANOVA component functions retained, is better estimated by HDD than ADD or FDD approximations.

\begin{table}
\caption{Effective dimensions of functions in Examples 1 and 2 for $p=0.99$}
\begin{centering}
\begin{tabular}{cccccccccccc}
\hline
 & \multicolumn{3}{c}{{\footnotesize Function $y_{1}$ in (\ref{34})}} &  & \multicolumn{3}{c}{{\footnotesize Function $y_{2}$ in (\ref{34})}} &  & \multicolumn{3}{c}{{\footnotesize Function $y$ in (\ref{48})}}\tabularnewline
\cline{2-4} \cline{6-8} \cline{10-12}
{\footnotesize $N$} & {\footnotesize $S_{ADD}$} & {\footnotesize $S_{FDD}$} & {\footnotesize $S_{HDD}$} &  & {\footnotesize $S_{ADD}$} & {\footnotesize $S_{FDD}$} & {\footnotesize $S_{HDD}$} &  & {\footnotesize $S_{ADD}$} & {\footnotesize $S_{FDD}$} & {\footnotesize $S_{HDD}$}\tabularnewline
\hline
{\footnotesize 5} & $-^{(\mathrm{a})}$ & $-^{(\mathrm{a})}$ & $-^{(\mathrm{a})}$ &  & $-^{(\mathrm{a})}$ & $-^{(\mathrm{a})}$ & $-^{(\mathrm{a})}$ &  & {\footnotesize 3} & {\footnotesize 3} & {\footnotesize 2}\tabularnewline
{\footnotesize 6} & {\footnotesize 4} & {\footnotesize 1} & {\footnotesize 1} &  & {\footnotesize 1} & {\footnotesize 2} & {\footnotesize 1} &  & $-^{(\mathrm{a})}$ & $-^{(\mathrm{a})}$ & $-^{(\mathrm{a})}$\tabularnewline
{\footnotesize 7} & {\footnotesize 5} & {\footnotesize 1} & {\footnotesize 1} &  & {\footnotesize 1} & {\footnotesize 2} & {\footnotesize 1} &  & $-^{(\mathrm{a})}$ & $-^{(\mathrm{a})}$ & $-^{(\mathrm{a})}$\tabularnewline
{\footnotesize 8} & {\footnotesize 5} & {\footnotesize 1} & {\footnotesize 1} &  & {\footnotesize 1} & {\footnotesize 2} & {\footnotesize 1} &  & $-^{(\mathrm{a})}$ & $-^{(\mathrm{a})}$ & $-^{(\mathrm{a})}$\tabularnewline
{\footnotesize 9} & {\footnotesize 6} & {\footnotesize 1} & {\footnotesize 1} &  & {\footnotesize 1} & {\footnotesize 2} & {\footnotesize 1} &  & $-^{(\mathrm{a})}$ & $-^{(\mathrm{a})}$ & $-^{(\mathrm{a})}$\tabularnewline
{\footnotesize 10} & {\footnotesize 6} & {\footnotesize 1} & {\footnotesize 1} &  & {\footnotesize 1} & {\footnotesize 2} & {\footnotesize 1} &  & $-^{(\mathrm{a})}$ & $-^{(\mathrm{a})}$ & $-^{(\mathrm{a})}$\tabularnewline
\hline
\end{tabular}
\par\end{centering}
{\footnotesize (a) Not applicable as $6 \le N \le 10$ in Example 1 and $N=5$ in Example 2.}\\
\label{table4}
\end{table}

\section{Univariate Approximations}

The univariate truncation ($S=1$) of each decomposition, whether
ADD, FDD, or HDD, constitutes simplest possible approximation of $y$
other than its mean. In which case, the approximate solutions of statistical
moments and error analysis simplify and can be obtained or conducted
in closed-form or analytically. They are of huge practical interest, because modeling and simulation of large-scale complex systems carry a heavy computational price tag, for which only a univariate truncation is feasible.

\subsection{ADD and FDD Error Analyses}

Consider the univariate ADD approximation
\begin{equation}
\tilde{y}_{1}(\mathbf{X})=y_{\emptyset}+{\displaystyle \sum_{i=1}^{N}y_{\{i\}}(X_{i})}\label{52}
\end{equation}
and the univariate FDD approximation
\begin{equation}
\hat{y}_{1}(\mathbf{X})=
(1+z_\emptyset){\displaystyle \prod_{i=1}^N \left[ 1 + z_{\{i\}}(X_i)\right]} =
{\displaystyle y_{\emptyset}}{\displaystyle {\displaystyle \prod_{i=1}^{N}}\frac{{\displaystyle {\displaystyle {\displaystyle y_{\emptyset}}+y_{\{i\}}(X_{i})}}}{{\displaystyle y_{\emptyset}}}}\label{53}
\end{equation}
of $y(\mathbf{X})$, obtained by fixing $S=1$ in (\ref{3}) and (\ref{16}),
respectively. Applying the expectation operator
on $\tilde{y}_{1}(\mathbf{X})$, $\hat{y}_{1}(\mathbf{X})$, and their
squares, the means
\begin{equation}
\mathbb{E}[\tilde{y}_{1}(\mathbf{X})]=\mathbb{E}[\hat{y}_{1}(\mathbf{X})]={\displaystyle \mathbb{E}[y(\mathbf{X})]:=y_{\emptyset}}\nonumber
\end{equation}
are identical and exact, although the respective variances,
\begin{equation}
\tilde{\sigma}_{1}^{2}:=\mathbb{E}\left[\left(\tilde{y}_{1}(\mathbf{X})-y_{\emptyset}\right)^{2}\right]=\sum_{i=1}^{N}\sigma_{\{i\}}^{2}\;\text{and}\;\label{54}
\end{equation}
\begin{equation}
\hat{\sigma}_{1}^{2}:=\mathbb{E}\left[\left(\hat{y}_{1}(\mathbf{X})-y_{\emptyset}\right)^{2}\right]={\displaystyle y_{\emptyset}^{2}}\left[{\displaystyle \prod_{i=1}^{N}}\left(1+\frac{{\displaystyle {\displaystyle \sigma_{\{i\}}^{2}}}}{{\displaystyle y_{\emptyset}^{2}}}\right)-1\right],\label{55}
\end{equation}
are approximate, where $\sigma_{\{i\}}^{2}:=\mathbb{E}\left[y_{\{i\}}^{2}(X_{i})\right]$
is the variance of the \emph{zero}-mean, univariate ADD component
function $y_{\{i\}}$, $i=1,\cdots,N$. The last expression of (\ref{55})
is obtained from the realization that $\mathbb{E}[{\displaystyle {\displaystyle \{{\displaystyle y_{\emptyset}}+y_{\{i\}}(X_{i})\}/{\displaystyle y_{\emptyset}}}}]=1$
and $\mathbb{E}[{\displaystyle {\displaystyle \{{\displaystyle y_{\emptyset}}+y_{\{i\}}(X_{i})\}^{2}/{\displaystyle y_{\emptyset}^{2}}}}]=1+\sigma_{\{i\}}^{2}/{\displaystyle y_{\emptyset}^{2}}$.
Comparing (\ref{54}) and (\ref{55}),  $\hat{\sigma}_{1}^{2}\to\tilde{\sigma}_{1}^{2}$ when $\sigma_{\{i\}}^{2}/{\displaystyle y_{\emptyset}^{2}\to0}$ or when $\sigma_{\{i\}}^{2}/{\displaystyle y_{\emptyset}^{2}<1}$ and $N\to\infty$. See the original work of Hoeffding \cite{hoeffding48} for the existence of ADD for $N \to \infty$.

Given a general square-integrable function $y$, are these univariate
approximations adequate? Which approximation between ADD and FDD is
more precise? The resolution of these questions depends on error analysis, where
\begin{equation}
\tilde{e}_{1}:=\mathbb{E}\left[y(\mathbf{X})-\tilde{y}_{1}(\mathbf{X})\right]^{2}\;\text{and}\;\hat{e}_{1}:=\mathbb{E}\left[y(\mathbf{X})-\hat{y}_{1}(\mathbf{X})\right]^{2}\label{56}
\end{equation}
define two second-moment errors, stemming from univariate ADD and
FDD approximations of $y$. Applying (\ref{1a}), (\ref{52}), (\ref{53})
into (\ref{56}) and invoking Propositions \ref{prop:1} and \ref{prop:2},
\begin{equation}
\tilde{e}_{1}=\sum_{s=2}^{N}\:\sum_{{\textstyle {\emptyset\ne u\subseteq\{1,\cdots,N\}\atop |u|=s}}}\sigma_{u}^{2}\label{58}
\end{equation}
is a sum of variance terms contributed by bivariate and higher-variate
component functions of $y$, whereas
\begin{equation}
\begin{array}{rcl}
\hat{e}_{1} & = & {\displaystyle \sum_{s=2}^{N}}~{\displaystyle \sum_{{\textstyle {\emptyset\ne u\subseteq\{1,\cdots,N\}\atop |u|=s}}}}\sigma_{u}^{2}+
y_{\emptyset}^{2}\:\mathbb{E} \left[
{\displaystyle \sum_{s=2}^{N}}~{\displaystyle \sum_{{\textstyle {\emptyset\ne u\subseteq\{1,\cdots,N\}\atop |u|=s}}}}~
{\displaystyle \prod_{i\in u} \frac{y_{\{i\}}(X_{i})}{y_{\emptyset}}}
\right]^2 - \\
 &  & 2y_{\emptyset}{\displaystyle \sum_{s=2}^{N}\:}
\mathbb{E} \left[
\left\{
{\displaystyle \sum_{s=2}^{N}}~{\displaystyle \sum_{{\textstyle {\emptyset\ne u\subseteq\{1,\cdots,N\}\atop |u|=s}}}}~
{\displaystyle \prod_{i\in u} \frac{y_{\{i\}}(X_{i})}{y_{\emptyset}}}
\right\}
\left\{
{\displaystyle \sum_{s=2}^{N}}~{\displaystyle \sum_{{\textstyle {\emptyset\ne u\subseteq\{1,\cdots,N\}\atop |u|=s}}}}\:y_u(\mathbf{X}_u)
\right\}
\right]
\end{array}\label{59}
\end{equation}
includes two additional terms, comprising expectations of products
of three or more component functions of $y$. Since the first term
of (\ref{59}) is the same as $\tilde{e}_{1}$, the univariate FDD
approximation incurs less (more) error than the univariate ADD approximation
if the second term is smaller (larger) than the third term. Therefore,
the condition
\begin{equation}
\begin{array}{c}
\mathbb{E} \left[
{\displaystyle \sum_{s=2}^{N}}~{\displaystyle \sum_{{\textstyle {\emptyset\ne u\subseteq\{1,\cdots,N\}\atop |u|=s}}}}~
{\displaystyle \prod_{i\in u} \frac{y_{\{i\}}(X_{i})}{y_{\emptyset}}}
\right]^2
< \\
{\displaystyle \frac{2}{y_{\emptyset}} }
{\displaystyle \sum_{s=2}^{N}\:}
\mathbb{E} \left[
\left\{
{\displaystyle \sum_{s=2}^{N}}~{\displaystyle \sum_{{\textstyle {\emptyset\ne u\subseteq\{1,\cdots,N\}\atop |u|=s}}}}~
{\displaystyle \prod_{i\in u} \frac{y_{\{i\}}(X_{i})}{y_{\emptyset}}}
\right\}
\left\{
{\displaystyle \sum_{s=2}^{N}}~{\displaystyle \sum_{{\textstyle {\emptyset\ne u\subseteq\{1,\cdots,N\}\atop |u|=s}}}}\:y_u(\mathbf{X}_u)
\right\}
\right]
\end{array}
\nonumber
\end{equation}
determines when the univariate FDD approximation is more precise than
the univariate ADD approximation and vice versa. When $N=2$, for
instance, $\hat{e}_{1}<\tilde{e}_{1}$ if $\mathbb{E}[y_{\{1\}}^{2}(X_{1})y_{\{2\}}^{2}(X_{2})]< 2y_{\emptyset}
\mathbb{E}[y_{\{1\}}(X_{1})y_{\{2\}}(X_{2})y_{\{12\}}(X_{1},X_{2})]$
and vice versa.

{\em Remark 7.} The ADD approximation $\tilde{y}_{1}(\mathbf{X})$ is called
univariate because (\ref{52}) comprises a sum of at most univariate
component functions, describing only the main effect of $\mathbf{X}$.
In contrast, the FDD approximation $\hat{y}_{1}(\mathbf{X})$ in (\ref{53})
contains products of various univariate functions. Therefore, some
effects of interactions between two input variables $X_{i}$ and $X_{j}$,
$i\ne j$, subsist in $\hat{y}_{1}(\mathbf{X})$. For example, consider
a function $y=y_{\emptyset}+{\displaystyle y_{\{1\}}(X_{1})}+{\displaystyle y_{\{2\}}(X_{2})}+y_{\{1\}}(X_{1})y_{\{2\}}(X_{2})/y_{\emptyset}$
of two variables, containing a sum and a product of its univariate
ANOVA component functions. The univariate ADD approximation, $\tilde{y}_{1,m}=y_{\emptyset}+{\displaystyle y_{\{1\}}(X_{1})}+{\displaystyle y_{\{2\}}(X_{2})}$,
captures only the main effects of $X_{1}$ and $X_{2}$, and may produce
non-negligible errors if the product term of $y$ is significant.
On the other hand, the univariate FDD approximation, $\hat{y}_{1}=(1+z_{\emptyset})[1+z_{\{1\}}(X_{1})][1+z_{\{2\}}(X_{2})]=y_{\emptyset}+{\displaystyle y_{\{1\}}(X_{1})}+{\displaystyle y_{\{2\}}(X_{2})}+y_{\{1\}}(X_{1})y_{\{2\}}(X_{2})/y_{\emptyset}$,
obtained using the relationships in (\ref{13}) and (\ref{14}), exactly
reproduces $y$, thereby capturing not only the main effects, but
also the interactive effect of input variables. Similar conditions
prevail for a function of an arbitrary, but finite, number of variables, provided that the higher-variate ANOVA component functions are products of univariate ANOVA component functions.  However, the ANOVA component functions with distinct dimensions are unrelated in general.  Therefore, the error committed by an FDD approximation may or may not be lower than that by an ADD approximation.  Nonetheless, the term ``univariate'' used in this paper for the FDD approximation should be interpreted in the context of including at most univariate component functions, not necessarily preserving only the main effects.

There exist two special cases where one of the two univariate
approximations does not perpetrate any error. First, consider a purely
additive function $y$, where its zero-variate and univariate component
functions are arbitrary, but its bivariate and higher-variate ANOVA
component functions vanish, that is, $y_{u}(\mathbf{X}_{u})=0$ for
$2\le|u|\le N$. In this case, (\ref{58}) and (\ref{59}) yield $\tilde{e}_{1}=0$,
while
\begin{equation}
\hat{e}_{1}=
y_{\emptyset}^{2}\:\mathbb{E} \left[
{\displaystyle \sum_{s=2}^{N}}~{\displaystyle \sum_{{\textstyle {\emptyset\ne u\subseteq\{1,\cdots,N\}\atop |u|=s}}}}~
{\displaystyle \prod_{i\in u} \frac{y_{\{i\}}(X_{i})}{y_{\emptyset}}}
\right]^2
>0,\nonumber
\end{equation}
indicating superiority of univariate ADD over univariate FDD approximations.
Second, consider a function $y$, where bivariate and higher-variate
ANOVA component functions are products of zero-variate and univariate
component functions, distributed as follows: $y_{u}(\mathbf{X}_{u})={\displaystyle y_{\emptyset}}\prod_{i\in u\subseteq\{1,\cdots,N\}}\: y_{\{i\}}(X_{i})/y_{\emptyset}]$
for $2\le|u|\le N$. In the latter case, (\ref{58}) and (\ref{59})
deliver
\begin{equation}
\tilde{e}_{1}= y_{\emptyset}^{2}\:
{\displaystyle \sum_{s=2}^{N}}~{\displaystyle \sum_{{\textstyle {\emptyset\ne u\subseteq\{1,\cdots,N\}\atop |u|=s}}}}~
\mathbb{E} \left[
{\displaystyle \prod_{i\in u} \frac{y_{\{i\}}^2(X_{i})}{y_{\emptyset}^2}}
\right]
>0,\nonumber
\end{equation}
but $\hat{e}_{1}=0$, reversing the trend of the previous case.

\subsection{HDD}

The $S$-variate hybrid approximations and their statistics proposed
in the preceding section also simplify for univariate truncations
of ADD and FDD. Both linear and nonlinear hybrid approximations are
described by the following two propositions.
\begin{proposition}
\label{prop:3}A linear, univariate HDD approximation of $w(\mathbf{X})$,
obtained by setting $S=1$ in (\ref{39}) through (\ref{41}), is
\begin{equation}
\bar{w}_{1,l}(\mathbf{X};\alpha_{1,l},\beta_{1,l})=\alpha_{S,l}\tilde{w}_{1}(\mathbf{X})+\beta_{1,l}\hat{w}_{1}(\mathbf{X}),\label{60}
\end{equation}
where the model parameters
\begin{equation}
\alpha_{1,l}=\dfrac{\hat{\sigma}_{1}^{2}-\mathbb{E}\left[w(\mathbf{X})\hat{w}_{1}(\mathbf{X})\right]}{\hat{\sigma}_{1}^{2}-\tilde{\sigma}_{1}^{2}}\;\text{and}\;\beta_{1,l}=\dfrac{\mathbb{E}\left[w(\mathbf{X})\hat{w}_{1}(\mathbf{X})\right]-\tilde{\sigma}_{1}^{2}}{\hat{\sigma}_{1}^{2}-\tilde{\sigma}_{1}^{2}}.\label{61}
\end{equation}
\end{proposition}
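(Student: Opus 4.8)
The plan is to obtain Proposition~\ref{prop:3} as a direct specialization of Corollary~\ref{cor:3} to the univariate case $S=1$, so that the real work reduces to evaluating a single cross-moment and then performing an algebraic cancellation. First I would record the univariate ingredients already in hand: the zero-mean ADD and FDD approximations $\tilde{w}_{1}(\mathbf{X})=\sum_{i=1}^{N}y_{\{i\}}(X_{i})$ and $\hat{w}_{1}(\mathbf{X})=\hat{y}_{1}(\mathbf{X})-y_{\emptyset}$ from (\ref{52}) and (\ref{53}), together with their variances $\tilde{\sigma}_{1}^{2}$ and $\hat{\sigma}_{1}^{2}$ from (\ref{54}) and (\ref{55}). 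The generic model parameters (\ref{40}) and (\ref{41}) both carry the cross-moment $\mathbb{E}[\tilde{w}_{S}(\mathbf{X})\hat{w}_{S}(\mathbf{X})]$, so the crux is to evaluate this quantity at $S=1$.

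The central identity I would establish is $\mathbb{E}[\tilde{w}_{1}(\mathbf{X})\hat{w}_{1}(\mathbf{X})]=\tilde{\sigma}_{1}^{2}$. To prove it, I would write $\hat{w}_{1}(\mathbf{X})=y_{\emptyset}\bigl[\prod_{i=1}^{N}(1+y_{\{i\}}(X_{i})/y_{\emptyset})-1\bigr]$ and expand the product as $\sum_{\emptyset\ne u}\prod_{i\in u}y_{\{i\}}(X_{i})/y_{\emptyset}$. Multiplying by $\tilde{w}_{1}=\sum_{j}y_{\{j\}}(X_{j})$, taking expectations, and invoking the independence of the coordinates of $\mathbf{X}$ together with the zero-mean property of the univariate ADD component functions (Proposition~\ref{prop:1}), every term $\mathbb{E}[y_{\{j\}}\prod_{i\in u}y_{\{i\}}/y_{\emptyset}]$ with $j\notin u$ vanishes, and every term with $j\in u$ but $|u|\ge 2$ also vanishes because it still contains at least one mean-zero factor. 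Only the diagonal terms $u=\{j\}$ survive, each contributing $\mathbb{E}[y_{\{j\}}^{2}(X_{j})]/y_{\emptyset}=\sigma_{\{j\}}^{2}/y_{\emptyset}$, and summing gives $\sum_{j}\sigma_{\{j\}}^{2}=\tilde{\sigma}_{1}^{2}$. Equivalently, one may note that $\hat{w}_{1}=\tilde{w}_{1}+\hat{r}$, where $\hat{r}$ collects the products of two or more univariate terms, and $\hat{r}$ is orthogonal to $\tilde{w}_{1}$ by the same independence and mean-zero reasoning, so that $\mathbb{E}[\tilde{w}_{1}\hat{w}_{1}]=\mathbb{E}[\tilde{w}_{1}^{2}]=\tilde{\sigma}_{1}^{2}$.

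With this identity, I would substitute $\mathbb{E}[\tilde{w}_{1}\hat{w}_{1}]=\tilde{\sigma}_{1}^{2}$ into (\ref{40}) and (\ref{41}). In each formula a common factor $\tilde{\sigma}_{1}^{2}$ appears: the denominator becomes $\tilde{\sigma}_{1}^{2}\hat{\sigma}_{1}^{2}-(\tilde{\sigma}_{1}^{2})^{2}=\tilde{\sigma}_{1}^{2}(\hat{\sigma}_{1}^{2}-\tilde{\sigma}_{1}^{2})$, the numerator of $\alpha_{1,l}$ becomes $\tilde{\sigma}_{1}^{2}(\hat{\sigma}_{1}^{2}-\mathbb{E}[w(\mathbf{X})\hat{w}_{1}(\mathbf{X})])$, and the numerator of $\beta_{1,l}$ becomes $\tilde{\sigma}_{1}^{2}(\mathbb{E}[w(\mathbf{X})\hat{w}_{1}(\mathbf{X})]-\tilde{\sigma}_{1}^{2})$. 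Cancelling $\tilde{\sigma}_{1}^{2}$ yields precisely the reduced expressions (\ref{61}), while the approximation itself is (\ref{39}) at $S=1$, completing the argument.

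I expect the main obstacle to be the cross-moment evaluation: one must argue carefully that, after expanding the FDD product, the combination of independence and mean-zero factors annihilates all but the $N$ diagonal terms. Everything downstream is routine factoring and cancellation.
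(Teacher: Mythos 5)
Your proposal is correct and follows essentially the paper's own route: the paper likewise obtains (\ref{61}) by substituting the identity $\mathbb{E}[\tilde{w}_{1}(\mathbf{X})\hat{w}_{1}(\mathbf{X})]=\mathbb{E}[\tilde{w}_{1}^{2}(\mathbf{X})]=\tilde{\sigma}_{1}^{2}$ (attributed to Propositions \ref{prop:1} and \ref{prop:2}) into the general formulas (\ref{40}) and (\ref{41}) of Corollary \ref{cor:3} and cancelling the common factor $\tilde{\sigma}_{1}^{2}$, exactly as you do. The only blemish is a dropped prefactor $y_{\emptyset}$ in your per-term evaluation of the diagonal contributions (each surviving term equals $\sigma_{\{j\}}^{2}$, not $\sigma_{\{j\}}^{2}/y_{\emptyset}$), which does not affect the conclusion since your stated total and your equivalent orthogonality argument $\hat{w}_{1}=\tilde{w}_{1}+\hat{r}$ are right.
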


The result in Proposition \ref{prop:3} is obtained using the relationships,
$\mathbb{E}[w(\mathbf{X})\tilde{w}_{1}(\mathbf{X})]=\mathbb{E}[\tilde{w}_{1}(\mathbf{X})\hat{w}_{1}(\mathbf{X})]=\mathbb{E}[\tilde{w}_{1}^{2}(\mathbf{X})]=:\tilde{\sigma}_{1}^{2}$,
that stem from Propositions \ref{prop:1} and \ref{prop:2}. The remaining
expectation
\begin{equation}
\mathbb{E}\left[w(\mathbf{X})\hat{w}_{1}(\mathbf{X})\right]=
\mathbb{E}\left[y(\mathbf{X})\hat{y}_{1}(\mathbf{X})\right]-y_{\emptyset}^{2}
\label{59b}
\end{equation}
cannot be reduced further as it involves the original function $y$
or $w$, but it can be estimated by sampling methods or numerical
integration.

{\em Remark 8.} The two parameters of the first linear model, described by
(\ref{60}) and (\ref{61}), add up to \emph{one}. This is due to
special properties of $\tilde{y}_{1}(\mathbf{X})$ and $\hat{y}_{1}(\mathbf{X})$
discussed in Subsection 5.1. Therefore, the second linear model, described
by (\ref{42}) and (\ref{43}) at univariate truncation ($S=1$),
is redundant, as it leads to the same solution of the first linear
model. However, due to complicated forms of $\hat{y}_{S}(\mathbf{X})$, where
$2\le S<N$, the same relationship does not hold for two generic, linear, $S$-variate HDD approximations.  See Corollaries \ref{cor:3} and \ref{cor:4} for further insights.

The mean of $\bar{w}_{1,l}(\mathbf{X};\alpha_{1,l},\beta_{1,l})$
is \emph{zero} and, therefore, $\mathbb{E}[\bar{y}_{1,l}(\mathbf{X};\alpha_{1,l},\beta_{1,l})]=y_{\emptyset}$,
matching the exact mean. The variance of $\bar{w}_{1,l}(\mathbf{X};\alpha_{1,l},\beta_{1,l})$
or $\bar{y}_{1,l}(\mathbf{X};\alpha_{1,l},\beta_{1,l})$ is
\begin{equation}
\begin{array}{rcl}
\bar{\sigma}_{1,l}^{2} & := & \mathbb{E}\left[\bar{w}_{1,l}^{2}
(\mathbf{X};\alpha_{1,l},\beta_{1,l})\right] \\
                       & =  &
\left(\alpha_{1,l}^{2}+2\alpha_{1,l}\beta_{1,l}\right)\tilde{\sigma}_{1}^{2}+
\beta_{1,l}^{2}\hat{\sigma}_{1}^{2} \\
                       & =  &
\left(2\alpha_{1,l}-\alpha_{1,l}^{2}\right)\tilde{\sigma}_{1}^{2}+
\left(1-\alpha_{1,l}\right)^{2}\hat{\sigma}_{1}^{2},
\end{array}
\label{63}
\end{equation}
a linear combination of variances from univariate ADD and FDD approximations.
\begin{proposition}
\label{prop:4}A nonlinear, univariate HDD approximation of $w(\mathbf{X})$,
obtained by setting $S=1$ in (\ref{36}) through (\ref{38}), is
\begin{equation}
\bar{w}_{1,n}(\mathbf{X};\alpha_{1,n},\beta_{1,n},\gamma_{1,n}) = \alpha_{1,n}\tilde{w}_{1}(\mathbf{X})+\beta_{1,n}\hat{w}_{1}(\mathbf{X})+\gamma_{1,n}\left[\tilde{w}_{1}(\mathbf{X})\hat{w}_{1}(\mathbf{X})-\tilde{\sigma}_{1}^{2}\right],\nonumber
\end{equation}
where the model parameters are the solution of
\begin{equation}
\left[\!\!\begin{array}{ccc}
\tilde{\sigma}_{1}^{2} & \tilde{\sigma}_{1}^{2} & \mathbb{E}\left[\tilde{w}_{1}^{2}(\mathbf{X})\hat{w}_{1}(\mathbf{X})\right]\\
 & \hat{\sigma}_{1}^{2} & \mathbb{E}\left[\tilde{w}_{1}(\mathbf{X})\hat{w}_{1}^{2}(\mathbf{X})\right]\\
(\text{sym.}) &  & \mathbb{E}\left[\tilde{w}_{1}^{2}(\mathbf{X})\hat{w}_{1}^{2}(\mathbf{X})\right]-\tilde{\sigma}_{1}^{4}
\end{array}\!\!\right]\left\{ \!\!\!\begin{array}{c}
\alpha_{1,n}\\
\beta_{1,n}\\
\gamma_{1,n}
\end{array}\!\!\!\right\} =\left\{ \begin{array}{c}
\tilde{\sigma}_{1}^{2}\\
\mathbb{E}\left[w(\mathbf{X})\hat{w}_{1}(\mathbf{X})\right]\\
\mathbb{E}\left[w(\mathbf{X})\tilde{w}_{1}(\mathbf{X})\hat{w}_{1}(\mathbf{X})\right]
\end{array}\right\} .\label{65}
\end{equation}

\end{proposition}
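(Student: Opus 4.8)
The plan is to obtain Proposition \ref{prop:4} as the $S=1$ specialization of Theorem \ref{thm:2}, with every simplification driven by a single orthogonality identity. First I would set $S=1$ throughout the nonlinear form (\ref{36}) and its associated normal equations (\ref{36b}), so that $\tilde{w}_S$, $\hat{w}_S$, $\tilde{\sigma}_S^2$, and $\hat{\sigma}_S^2$ are replaced by their univariate counterparts $\tilde{w}_1$, $\hat{w}_1$, $\tilde{\sigma}_1^2$, and $\hat{\sigma}_1^2$. Nothing in the derivation of (\ref{36b}) depends on $S$, so this substitution is legitimate and yields a system of exactly the shape of (\ref{65}) before any reduction.

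The decisive ingredient is the relationship $\mathbb{E}[\tilde{w}_1(\mathbf{X})\hat{w}_1(\mathbf{X})]=\tilde{\sigma}_1^2$, already recorded just before Proposition \ref{prop:3}. I would re-derive it from Propositions \ref{prop:1} and \ref{prop:2}: writing $\tilde{w}_1=\sum_i y_{\{i\}}(X_i)$ and expanding $\hat{w}_1$ via (\ref{53}) as $y_\emptyset[\prod_i(1+y_{\{i\}}(X_i)/y_\emptyset)-1]$, the zero-mean property of each $y_{\{i\}}$ together with the independence of the coordinates annihilates every cross term except the diagonal squares, leaving $\sum_i\sigma_{\{i\}}^2=\tilde{\sigma}_1^2$. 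Substituting $\mathbb{E}[\tilde{w}_1\hat{w}_1]=\tilde{\sigma}_1^2$ into the bracketed centering term of (\ref{36}) converts the nonlinear correction into $\tilde{w}_1\hat{w}_1-\tilde{\sigma}_1^2$, reproducing the stated form of $\bar{w}_{1,n}$.

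For the matrix in (\ref{65}) I would carry the same substitution through the system in (\ref{36b}). The $(1,1)$, $(2,2)$, $(1,3)$, and $(2,3)$ entries are merely relabelled at $S=1$; the off-diagonal $(1,2)$ entry $\mathbb{E}[\tilde{w}_1\hat{w}_1]$ collapses to $\tilde{\sigma}_1^2$, which is precisely what produces the repeated $\tilde{\sigma}_1^2$ across the top row. The only genuine computation is the $(3,3)$ entry $\mathbb{E}[\tilde{w}_1\hat{w}_1-\mathbb{E}\{\tilde{w}_1\hat{w}_1\}]^2$; expanding the square and applying $\mathbb{E}[\tilde{w}_1\hat{w}_1]=\tilde{\sigma}_1^2$ once more gives $\mathbb{E}[\tilde{w}_1^2\hat{w}_1^2]-2\tilde{\sigma}_1^4+\tilde{\sigma}_1^4=\mathbb{E}[\tilde{w}_1^2\hat{w}_1^2]-\tilde{\sigma}_1^4$, matching the stated matrix. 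The right-hand side of (\ref{36b}) transfers unchanged, its first entry already equal to $\tilde{\sigma}_1^2$ since $\mathbb{E}[w\tilde{w}_1]=\tilde{\sigma}_1^2$ by orthogonality.

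I do not anticipate a real obstacle, because the result is a specialization rather than a fresh optimization; the only point requiring care is the bookkeeping in the $(3,3)$ entry, where one must apply $\mathbb{E}[\tilde{w}_1\hat{w}_1]=\tilde{\sigma}_1^2$ inside the expanded square rather than treating $\mathbb{E}\{\tilde{w}_1\hat{w}_1\}$ as an independent quantity. Collecting these simplifications reproduces the system (\ref{65}) and completes the proof.
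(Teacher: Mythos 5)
Your proposal is correct and follows essentially the same route as the paper: specialize Theorem \ref{thm:2} to $S=1$ and reduce the entries of (\ref{36b}) using the identities $\mathbb{E}[w(\mathbf{X})\tilde{w}_{1}(\mathbf{X})]=\mathbb{E}[\tilde{w}_{1}(\mathbf{X})\hat{w}_{1}(\mathbf{X})]=\tilde{\sigma}_{1}^{2}$, which the paper likewise derives from Propositions \ref{prop:1} and \ref{prop:2} (as recorded just after Proposition \ref{prop:3}). Your computation of the $(3,3)$ entry as $\mathbb{E}[\tilde{w}_{1}^{2}(\mathbf{X})\hat{w}_{1}^{2}(\mathbf{X})]-\tilde{\sigma}_{1}^{4}$ matches the paper exactly.
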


Starting from (\ref{52}) and (\ref{53}) and applying Propositions
\ref{prop:1} and \ref{prop:2}, the additional expectations involved
in (\ref{65}) are
\begin{equation}
\mathbb{E}\left[\tilde{w}_{1}^{2}(\mathbf{X})\hat{w}_{1}(\mathbf{X})\right] = \frac{{\displaystyle {\displaystyle 2}}}{{\displaystyle y_{\emptyset}}}\sum_{i_{1}=1}^{N-1}\sum_{i_{2}=i_{1}+1}^{N}\sigma_{\{i_{1}\}}^{2}\sigma_{\{i_{2}\}}^{2}+\sum_{i=1}^{N}\mathbb{E}\left[y_{\{i\}}^{3}(X_{i})\right],\nonumber
\end{equation}
\begin{equation}
\mathbb{E}\left[\tilde{w}_{1}(\mathbf{X})\hat{w}_{1}^{2}(\mathbf{X})\right] = {\displaystyle \left(\hat{\sigma}_{1}^{2}+y_{\emptyset}^{2}\right)}\sum_{i=1}^{N}\frac{{\displaystyle {\displaystyle 2}}y_{\emptyset}\sigma_{\{i\}}^{2}+\mathbb{E}\left[y_{\{i\}}^{3}(X_{i})\right]}{\sigma_{\{i\}}^{2}+y_{\emptyset}^{2}}-{\displaystyle {\displaystyle 2}}y_{\emptyset}\tilde{\sigma}_{1}^{2},\nonumber
\end{equation}
\begin{equation}
\begin{array}{rcl}
\mathbb{E}\left[\tilde{w}_{1}^{2}(\mathbf{X})\hat{w}_{1}^{2}(\mathbf{X})\right]
& \!\!=\!\!\ &
\!\!{\displaystyle \left(\hat{\sigma}_{1}^{2} + y_{\emptyset}^{2}\right)}
\left(
{\displaystyle
\sum_{i=1}^{N}
\frac{y_{\emptyset}^{2}\sigma_{\{i\}}^{2}+2y_{\emptyset}\mathbb{E}
\left[y_{\{i\}}^{3}(X_{i})\right]+\mathbb{E}\left[y_{\{i\}}^{4}
(X_{i})\right]}{\sigma_{\{i\}}^{2}+y_{\emptyset}^{2}}}\right.  \\
& \!\! + \!\!&
 \!\! \left.2
{\displaystyle
 \sum_{i_{1}=1}^{N-1}\!\sum_{i_{2}=i_{1}+1}^{N}\!\!\!
 \frac{
 (2y_{\emptyset}\sigma_{\{i_{1}\}}^{2}+\mathbb{E}[y_{\{i_{1}\}}^{3}(X_{i_{1}})])
 (2y_{\emptyset}\sigma_{\{i_{2}\}}^{2}+\mathbb{E}[y_{\{i_{2}\}}^{3}(X_{i_{2}})])
      }
      {
  (\sigma_{\{i_{1}\}}^{2}+y_{\emptyset}^{2})
  (\sigma_{\{i_{2}\}}^{2}+y_{\emptyset}^{2})
      }
}
 \right) \\
 & \!\!+\!\! &
 \!\!y_{\emptyset}\tilde{\sigma}_{1}^{2} -
 2y_{\emptyset}{\displaystyle \sum_{i=1}^{N} \mathbb{E}\left[y_{\{i\}}^{3}(X_{i})\right]}-
 4 {\displaystyle
 \sum_{i_{1}=1}^{N-1}\sum_{i_{2}=i_{1}+1}^{N}\sigma_{\{i_{1}\}}^{2}
 \sigma_{\{i_{2}\}}^{2}},
 \end{array}
 \nonumber
\end{equation}
and
\begin{equation}
\mathbb{E}\left[w(\mathbf{X})\tilde{w}_{1}(\mathbf{X})\hat{w}_{1}(\mathbf{X})\right]=\mathbb{E}\left[y(\mathbf{X})\tilde{y}_{1}(\mathbf{X})\hat{y}_{1}(\mathbf{X})\right]-y_{\emptyset}\mathbb{E}\left[y(\mathbf{X})\hat{y}_{1}(\mathbf{X})\right]-{\displaystyle {\displaystyle 2}}y_{\emptyset}\tilde{\sigma}_{1}^{2},\label{70}
\end{equation}
expressed in terms of $y_{\emptyset}$ and various moments of univariate ANOVA component
functions. Compared with the linear, hybrid model, however, they require
third- and fourth-order moments that must be furnished. Similar to
(\ref{59b}), (\ref{70}) also involves $w$ or $y$ and cannot be
reduced further.

The mean of $\bar{w}_{1,n}(\mathbf{X};\alpha_{1,n},\beta_{1,n},\gamma_{1,n})$
is \emph{zero} and, therefore, $\mathbb{E}[\bar{y}_{1,n}(\mathbf{X};\alpha_{1,n},\beta_{1,n},\gamma_{1,n})]=y_{\emptyset}$,
matching the exact mean as well. However, the variance of $\bar{w}_{1,n}(\mathbf{X};\alpha_{1,n},\beta_{1,n},\gamma_{1,n})$
or $\bar{y}_{1,n}(\mathbf{X};\alpha_{1,n},\beta_{1,n},\gamma_{1,n})$
is
\begin{equation}
\begin{array}{rcl}
\bar{\sigma}_{1,n}^{2} & := & \mathbb{E}\left[\bar{w}_{1,n}^{2}(\mathbf{X};\alpha_{1,n},\beta_{1,n},\gamma_{1,n})\right]\\
 & = & \alpha_{1,n}^{2}\tilde{\sigma}_{1}^{2}+\beta_{1,n}^{2}\hat{\sigma}_{1}^{2}+
 \gamma_{1,n}^{2}\left(\mathbb{E}\left[\tilde{w}_{1}^{2}(\mathbf{X})\hat{w}_{1}^{2}
 (\mathbf{X})\right]-\tilde{\sigma}_{1}^{4} \right)
 +2\alpha_{1,n}\beta_{1,n}\tilde{\sigma}_{1}^{2}\\
 &  & +2\alpha_{1,n}\gamma_{1,n}\mathbb{E}\left[\tilde{w}_{1}^{2}(\mathbf{X})\hat{w}_{1}(\mathbf{X})\right]+2\beta_{1,n}\gamma_{1,n}\mathbb{E}\left[\tilde{w}_{1}(\mathbf{X})\hat{w}_{1}^{2}(\mathbf{X})\right],
\end{array}\label{71}
\end{equation}
a nonlinear combination of variances and higher-order moments from
univariate ADD and FDD approximations. It is trivial to show that
(\ref{71}) reduces to (\ref{63}) if $\alpha_{1,n}=\alpha_{1,l}$,
$\beta_{1,n}=\beta_{1,l}$, and $\gamma_{1,n}=0$.

\subsection{HDD Error Analysis}

For the univariate truncation, which approximation stemming from ADD, FDD, and HDD is most accurate?  Lemma \ref{lem:1} and Theorem \ref{thm:3} demonstrate that the HDD approximation commits the lowest error.

\begin{lemma}
\label{lem:1}
The variance of the univariate FDD approximation is greater than or equal to the variance of the univariate ADD approximation, that is, $\hat{\sigma}_{1}^{2} \ge \tilde{\sigma}_{1}^{2}$.
\end{lemma}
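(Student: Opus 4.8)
The plan is to argue directly from the two closed-form variance expressions already in hand: $\tilde{\sigma}_{1}^{2}=\sum_{i=1}^{N}\sigma_{\{i\}}^{2}$ from (\ref{54}) and $\hat{\sigma}_{1}^{2}=y_{\emptyset}^{2}\bigl[\prod_{i=1}^{N}(1+\sigma_{\{i\}}^{2}/y_{\emptyset}^{2})-1\bigr]$ from (\ref{55}). Because the FDD presupposes a non-zero mean, $y_{\emptyset}\ne 0$, so $y_{\emptyset}^{2}>0$ and division by it is legitimate and sign-preserving. First I would introduce the non-negative ratios $t_{i}:=\sigma_{\{i\}}^{2}/y_{\emptyset}^{2}\ge 0$, $i=1,\cdots,N$; after dividing the claimed inequality $\hat{\sigma}_{1}^{2}\ge\tilde{\sigma}_{1}^{2}$ through by $y_{\emptyset}^{2}$, it collapses to the purely algebraic statement
\[
\prod_{i=1}^{N}(1+t_{i})\ge 1+\sum_{i=1}^{N}t_{i}.
\]

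Next I would establish this product inequality. The cleanest route is the subset expansion of the left-hand side,
\[
\prod_{i=1}^{N}(1+t_{i})=\sum_{u\subseteq\{1,\cdots,N\}}\prod_{i\in u}t_{i}=1+\sum_{i=1}^{N}t_{i}+\sum_{\substack{u\subseteq\{1,\cdots,N\}\\|u|\ge 2}}\prod_{i\in u}t_{i},
\]
together with the observation that every term in the final sum over subsets of cardinality at least two is a product of non-negative factors and is therefore non-negative. Discarding these higher-order terms yields exactly the required lower bound. An equivalent, slightly more pedestrian alternative is a short induction on $N$, which I would mention in passing but not carry out.

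Finally I would multiply back through by $y_{\emptyset}^{2}>0$ to recover $\hat{\sigma}_{1}^{2}\ge\tilde{\sigma}_{1}^{2}$, with equality precisely when all the omitted products $\prod_{i\in u}t_{i}$ with $|u|\ge 2$ vanish, i.e.\ when at most one $\sigma_{\{i\}}^{2}$ is non-zero. There is no genuine obstacle in this proof: the only point requiring care is invoking $y_{\emptyset}\ne 0$ (guaranteed by the standing non-zero-mean hypothesis that underlies the FDD, cf.\ Lemma \ref{newlem}) so that both the division and the sign of the factor $y_{\emptyset}^{2}$ are controlled; the remaining content is the elementary Weierstrass-type product inequality, whose validity is immediate from the non-negativity of the cross terms.
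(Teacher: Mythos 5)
Your proof is correct and is essentially the paper's own argument: both expand $\prod_{i=1}^{N}\bigl(1+\sigma_{\{i\}}^{2}/y_{\emptyset}^{2}\bigr)$ over subsets, identify the cardinality-one terms with $\tilde{\sigma}_{1}^{2}$, and discard the non-negative higher-order products. Your added remarks on the role of $y_{\emptyset}\ne 0$ and the equality case are correct but do not change the route.
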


\begin{proof}
From (\ref{55}),
\begin{equation}
\begin{array}{rcl}
\hat{\sigma}_{1}^{2}  & = & {\displaystyle y_{\emptyset}^{2}}\left[{\displaystyle \prod_{i=1}^{N}}\left(1+\frac{{\displaystyle {\displaystyle \sigma_{\{i\}}^{2}}}}{{\displaystyle y_{\emptyset}^{2}}}\right)-1\right] \\
 &  =  & {\displaystyle \sum_{i=1}^N \sigma_{\{i\}}^2} +
 y_{\emptyset}^{2}{\displaystyle \sum_{s=2}^{N}}~{\displaystyle \sum_{{\textstyle {\emptyset\ne u\subseteq\{1,\cdots,N\}\atop |u|=s}}}}~
{\displaystyle \prod_{i\in u} \frac{\sigma_{\{i\}}^2}{y_{\emptyset}^2}} \\
 & \ge & \tilde{\sigma}_{1}^{2},
\end{array}
\nonumber
\end{equation}
where the last line follows from (\ref{54}) and the recognition that second term of the second line is non-negative.
\end{proof}

\begin{theorem}
\label{thm:3}
Let $\tilde{e}_{1}:=\mathbb{E}\left[y(\mathbf{X})-\tilde{y}_{1}(\mathbf{X})\right]^{2}$,
$\hat{e}_{1}:=\mathbb{E}\left[y(\mathbf{X})-\hat{y}_{1}(\mathbf{X})\right]^{2}$, and
$\bar{e}_{1,l}:=\mathbb{E}\left[y(\mathbf{X})-\bar{y}_{1,l}(\mathbf{X};\alpha_{1,l},\beta_{1,l})\right]^{2}$
define the mean-squared errors committed by the univariate ADD, univariate FDD, and univariate HDD (linear) approximations, respectively, of a real-valued, square-integrable function $y$.  Then
\begin{equation}
\bar{e}_{1,l} \le \tilde{e}_{1}, ~\bar{e}_{1,l} \le \hat{e}_{1}.
\nonumber
\end{equation}
\end{theorem}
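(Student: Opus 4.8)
The plan is to exploit the defining optimality of the linear HDD parameters. By Corollary~\ref{cor:3} (specialized to $S=1$ in Proposition~\ref{prop:3}), the pair $(\alpha_{1,l},\beta_{1,l})$ is constructed precisely to minimize the mean-squared error over all linear combinations of $\tilde{w}_{1}$ and $\hat{w}_{1}$. Hence $\bar{w}_{1,l}$ is the best least-squares approximation of $w$ within $\mathrm{span}\{\tilde{w}_{1},\hat{w}_{1}\}$, and the theorem will follow by comparing this minimum against two distinguished members of that span.

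First I would recast all three errors in a common zero-mean form. Writing $w=y-y_{\emptyset}$, $\tilde{w}_{1}=\tilde{y}_{1}-y_{\emptyset}$, and --- crucially --- $\hat{w}_{1}=\hat{y}_{1}-y_{\emptyset}$ (the last identity holding because the univariate FDD approximation reproduces the exact mean, $\mathbb{E}[\hat{y}_{1}]=y_{\emptyset}$, as established in Subsection~5.1), one obtains $\tilde{e}_{1}=\mathbb{E}[(w-\tilde{w}_{1})^{2}]$, $\hat{e}_{1}=\mathbb{E}[(w-\hat{w}_{1})^{2}]$, and $\bar{e}_{1,l}=\mathbb{E}[(w-\bar{w}_{1,l})^{2}]$. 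All three are thus values of the single quadratic functional $Q(\alpha,\beta):=\mathbb{E}[(w-\alpha\tilde{w}_{1}-\beta\hat{w}_{1})^{2}]$; specifically $\tilde{e}_{1}=Q(1,0)$, $\hat{e}_{1}=Q(0,1)$, and $\bar{e}_{1,l}=Q(\alpha_{1,l},\beta_{1,l})$.

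Next, since $(\alpha_{1,l},\beta_{1,l})$ minimizes $Q$ over $\mathbb{R}^{2}$, the minimum cannot exceed the value of $Q$ at any fixed point. Evaluating at $(\alpha,\beta)=(1,0)$ and $(\alpha,\beta)=(0,1)$ then delivers $\bar{e}_{1,l}\le\tilde{e}_{1}$ and $\bar{e}_{1,l}\le\hat{e}_{1}$ at once, which is the claim. The only matters requiring care are existence and well-posedness of the minimizer: $Q$ is a convex, nonnegative quadratic, so its infimum is attained (the normal equations of least squares being always consistent), and Lemma~\ref{lem:1} supplies $\hat{\sigma}_{1}^{2}\ge\tilde{\sigma}_{1}^{2}$, which makes the relevant Gram determinant $\tilde{\sigma}_{1}^{2}(\hat{\sigma}_{1}^{2}-\tilde{\sigma}_{1}^{2})$ nonnegative and, in the nondegenerate case, guarantees the explicit parameter formulas in (\ref{61}). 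I expect the main (though modest) obstacle to be the mean-centering bookkeeping --- in particular verifying $\hat{w}_{1}=\hat{y}_{1}-y_{\emptyset}$ so that $\hat{e}_{1}=Q(0,1)$ --- rather than any genuine analytic difficulty; once the three errors are identified as values of the same $Q$, the conclusion is a one-line projection argument.
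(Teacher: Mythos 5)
Your proof is correct, but it follows a genuinely different route from the paper's. You invoke the variational characterization: since $(\alpha_{1,l},\beta_{1,l})$ solves the normal equations of the convex quadratic $Q(\alpha,\beta)=\mathbb{E}\bigl[\left(w-\alpha\tilde{w}_{1}-\beta\hat{w}_{1}\right)^{2}\bigr]$, the value $\bar{e}_{1,l}=Q(\alpha_{1,l},\beta_{1,l})$ is the global minimum and is therefore dominated by $Q(1,0)=\tilde{e}_{1}$ and $Q(0,1)=\hat{e}_{1}$; your bookkeeping of the mean-centering (in particular $\mathbb{E}[\hat{y}_{1}]=y_{\emptyset}$, hence $\hat{w}_{1}=\hat{y}_{1}-y_{\emptyset}$) is exactly what is needed to identify $\hat{e}_{1}$ with $Q(0,1)$. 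The paper instead computes all three errors explicitly --- $\tilde{e}_{1}=\sigma^{2}-\tilde{\sigma}_{1}^{2}$, $\hat{e}_{1}=\sigma^{2}-2\alpha_{1,l}\tilde{\sigma}_{1}^{2}-(1-2\alpha_{1,l})\hat{\sigma}_{1}^{2}$, and $\bar{e}_{1,l}=\sigma^{2}-(2\alpha_{1,l}-\alpha_{1,l}^{2})\tilde{\sigma}_{1}^{2}-(1-\alpha_{1,l})^{2}\hat{\sigma}_{1}^{2}$ --- using the $S=1$ identities $\mathbb{E}[w\tilde{w}_{1}]=\mathbb{E}[\tilde{w}_{1}\hat{w}_{1}]=\tilde{\sigma}_{1}^{2}$ and the parameter formulas in (\ref{61}), and then factors the differences as $\bar{e}_{1,l}-\tilde{e}_{1}=-(1-\alpha_{1,l})^{2}(\hat{\sigma}_{1}^{2}-\tilde{\sigma}_{1}^{2})$ and $\bar{e}_{1,l}-\hat{e}_{1}=-\alpha_{1,l}^{2}(\hat{\sigma}_{1}^{2}-\tilde{\sigma}_{1}^{2})$, so that Lemma \ref{lem:1} is essential to fix the sign. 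Your argument does not need Lemma \ref{lem:1} at all and is insensitive to the degenerate case $\hat{\sigma}_{1}^{2}=\tilde{\sigma}_{1}^{2}$, where the explicit formulas break down but the least-squares minimum is still attained; what you give up is the quantitative information the paper's computation provides, namely the exact size of the gaps $\tilde{e}_{1}-\bar{e}_{1,l}$ and $\hat{e}_{1}-\bar{e}_{1,l}$ in terms of $\alpha_{1,l}$ and $\hat{\sigma}_{1}^{2}-\tilde{\sigma}_{1}^{2}$.
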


\begin{proof}
From (\ref{54}) and (\ref{58}),
\begin{equation}
\tilde{e}_{1} = \sigma^2 - {\displaystyle \sum_{i=1}^N \sigma_{\{i\}}^2} = \sigma^2 - \tilde{\sigma}_1^2.
\label{sr1}
\end{equation}
Since $y$, $\hat{y}_{1}$, and $\bar{y}_{1,l}$ have the same mean,
\begin{equation}
\begin{array}{rcl}
\hat{e}_{1}  & = & \mathbb{E}\left[w(\mathbf{X})-\hat{w}_{1}(\mathbf{X})\right]^{2} \\
             & = & \sigma^2 + \hat{\sigma}_1^2 - 2\mathbb{E}\left[w(\mathbf{X})\hat{w}_{1}(\mathbf{X})\right] \\
             & = & \sigma^2 - 2\alpha_{1,l}\tilde{\sigma}_1^2 - \left( 1-2\alpha_{1,l}\right) \hat{\sigma}_1^2,
\end{array}
\label{sr2}
\end{equation}
\begin{equation}
\begin{array}{rcl}
\bar{e}_{1,l} & = & \mathbb{E}\left[w(\mathbf{X})-\bar{w}_{1,l}(\mathbf{X};\alpha_{1,l},\beta_{1,l})\right]^{2} \\
              & = & \sigma^2 + \bar{\sigma}_{1,l}^2 - 2\mathbb{E}\left[w(\mathbf{X})\bar{w}_{1,l}(\mathbf{X};;\alpha_{1,l},\beta_{1,l})\right] \\
              & = & \sigma^2 + \left(2\alpha_{1,l}-\alpha_{1,l}^2\right)\tilde{\sigma}_1^2 + \left( 1-\alpha_{1,l}\right)^2 \hat{\sigma}_1^2 - \\
              &   & 2\mathbb{E}\left[w(\mathbf{X})\left\{ \alpha_{1,l}\tilde{w}_{1}(\mathbf{X})+
              (1-\alpha_{1,l})\hat{w}_{1}(\mathbf{X}) \right\} \right] \\
              & = & \sigma^2 - \left(2\alpha_{1,l}-\alpha_{1,l}^2\right)\tilde{\sigma}_1^2 - \left( 1-\alpha_{1,l}\right)^2 \hat{\sigma}_1^2.
\end{array}
\label{sr3}
\end{equation}
The third line of (\ref{sr2}) is obtained using (\ref{61}).  In (\ref{sr3}), the third line is derived using (\ref{60}), (\ref{61}), and (\ref{63}), whereas the fourth line is attained using (\ref{61}) and the understanding that $\mathbb{E}[w(\mathbf{X})\tilde{w}_1(\mathbf{X})]=\tilde{\sigma}_1^2$. Subtracting each of (\ref{sr1}) and (\ref{sr2}) from (\ref{sr3}) yields
\begin{equation}
\bar{e}_{1,l}-\tilde{e}_{1}  =  - \left(1-\alpha_{1,l}\right)^2 \left(\hat{\sigma}_1^2-\tilde{\sigma}_1^2 \right) \le 0,
\nonumber
\end{equation}
\begin{equation}
\bar{e}_{1,l}-\hat{e}_{1}  =  - \alpha_{1,l}^2 \left(\hat{\sigma}_1^2-\tilde{\sigma}_1^2 \right) \le 0,
\nonumber
\end{equation}
where the inequalities follow from Lemma \ref{lem:1}, completing the proof.
\end{proof}

Although the result of Theorem \ref{thm:3} is expected, presenting the theorem and a formal proof is appropriate, given that such result has yet to appear in the literature.  It is less simple to follow suit for the univariate nonlinear hybrid approximation or for a general $S$-variate hybrid approximation.

\subsection{Example 4}

Consider the function
\begin{equation}
y=\left[\frac{2}{N}\sum_{i=1}^{N}X_{i}\right]^{m}\label{72}
\end{equation}
of $N$ independent, identical, and uniformly distributed random variables $X_i$, $i=1,\cdots,N$ over {[}0,1{]}, where $N=10$ and $m\in\mathbb{N}$ is an exponent. The function $y$ in (\ref{72}) is purely additive when $m=1$, but it transitions
from strongly additive to strongly multiplicative as $m$ grows
larger. The objective of this example is to compare univariate ADD, univariate FDD, and univariate HDD (linear and nonlinear) approximations, that is, $\tilde{y}_{1}(\mathbf{X}),$
$\hat{y}_{1}(\mathbf{X})$, $\bar{y}_{1,l}(\mathbf{X};\alpha_{1,l},\beta_{1,l})=\bar{y}_{1,l'}(\mathbf{X};\alpha_{1,l'})$,
and $\bar{y}_{1,n}(\mathbf{X};\alpha_{1,n},\beta_{1,n},\gamma_{1,n})$
in calculating the variance and rare-event probabilities of $y(\mathbf{X})$ for $m=2,3,4,5,6,7,8$.
The second-moment properties of $y(\mathbf{X})$, given $m$, were
calculated exactly. The variances of $\tilde{y}_{1}(\mathbf{X})$
{[}(\ref{54}){]}, $\hat{y}_{1}(\mathbf{X})$ {[}(\ref{55}){]}, and
$\bar{y}_{1,l}(\mathbf{X};\alpha_{1,l},\beta_{1,l})$ {[}(\ref{63}){]},
including all univariate ADD and FDD components functions, were calculated
analytically. The expectations $\mathbb{E}[w(\mathbf{X})\hat{w}(\mathbf{X})]$ and
$\mathbb{E}[w(\mathbf{X})\tilde{w}(\mathbf{X})\hat{w}(\mathbf{X})]$,
involved in determining the optimal
model parameters {[}(\ref{61}),(\ref{65}){]} and the variance of $\bar{y}_{1,n}(\mathbf{X};\alpha_{1,n},\beta_{1,n},\gamma_{1,n})$
{[}(\ref{71}){]}, were estimated by a fully symmetric multidimensional
integration rule with nine generators \cite{genz83}.

\begin{figure}[h]
\begin{centering}
\includegraphics[scale=0.63]{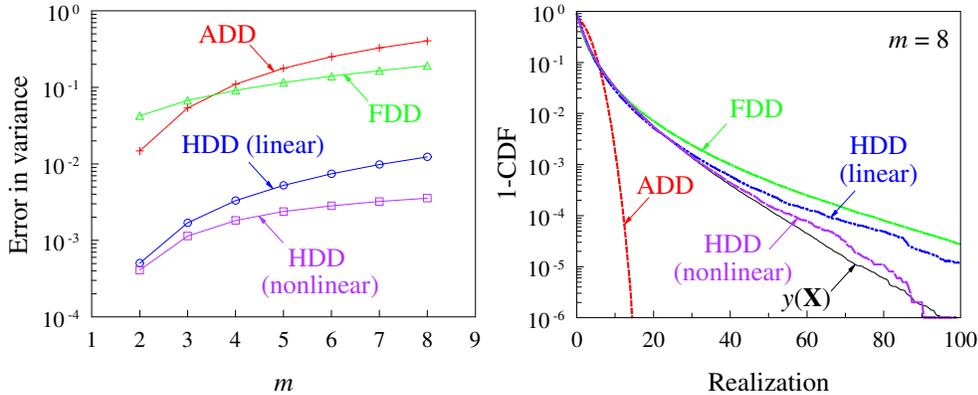}
\par\end{centering}
\caption{Results of univariate ADD, FDD, and HDD approximations of $y$ in Example 4; note: mean-squared errors (left); complementary CDF (right).}
\label{figure2}
\end{figure}

Figure \ref{figure2} (left) plots the respective errors in calculating the variance of
$y$ by four univariate ($S=1$) truncations of ADD, FDD, HDD (linear),
and HDD (nonlinear) against $m$. The definition of the error is the same
as in Example 1 or 2. When $m=2$ or 3, the function is still strongly additive and, therefore, the univariate ADD approximation is better than the univariate FDD approximation.  But the trend reverses when $4 \le m \le 8$, the range of higher values examined. This is because the function switches from dominantly additive ($m \le 3$) to dominantly multiplicative ($m > 3$) as $m$ increases. Nonetheless, for all values of $m$ considered, the univariate HDD approximation, whether linear or nonlinear,
commits lower errors than either univariate ADD or univariate FDD
approximation. The nonlinear version of the univariate HDD approximation
is even more precise than its linear counterpart $-$ a trend that
becomes pronounced when $m$ is larger. However, the improvement of
the nonlinear model comes with a price, as the model requires calculations
of higher-order moments, alluded to in the preceding subsection. Nonetheless, the HDD models proposed provide a means to calculate the second-moment properties more accurately
than ADD and FDD approximations.

Figure \ref{figure2} (right) displays the complementary cumulative distribution functions (CDFs) of $y$ for $m=8$ and its four univariate approximations, each obtained by 10 million Monte Carlo samples. The chosen scale of the vertical axis is logarithmic to delineate rare-event probabilities that are commonly used for reliability analysis of complex systems. Compared with univariate ADD and FDD approximations, both variants of the univariate HDD approximation developed provide better estimates of the tail probabilistic characteristics of $y$.  The ADD approximation significantly underestimates the tail behavior, whereas the FDD approximation overestimates the complementary CDF by a moderate amount.  The nonlinear HDD approximation is more precise than the linear version, especially when the probabilities are very low.  Indeed, an HDD approximation is desirable for uncertainty quantification of  high-dimensional, complex systems, where only univariate truncations are feasible, but not necessarily producing adequate accuracy by either ADD or FDD approximation alone.

\subsection{Example 5}
The final example is motivated on solving a practical engineering problem, which entails eigenspectrum analysis of a piezoelectric transducer commonly used for converting electrical pulses to mechanical vibrations, and vice versa. Figure \ref{figure3} (left) shows a 25-mm-diameter cylinder made of a piezoelectric ceramic PZT4 (lead zirconate titanate) with brass end caps. The thicknesses of the transducer and end caps are 1.5 mm and 3 mm, respectively. The cylinder, 25 mm long, was electroded on both the inner and outer surfaces. The random variables include: (1) ten non-zero constants defining elasticity, piezoelectric stress coefficients, and dielectric properties of PZT4; (2) elastic modulus and Poisson's ratio of brass; and (3) mass densities of brass and PZT4 \cite{rahman09b}. The statistical properties of all 14 random variables are listed in Table \ref{table5}. The random variables are independent and follow lognormal distributions. Due to axisymmetry, a twenty-noded finite-element discrete model of a slice of the transducer, shown in Table \ref{figure3} (right), was created.  The objective is to evaluate various univariate approximations in calculating the second-moment properties of the natural frequencies of the transducer.  For this problem, the ANOVA component functions of each natural frequency response were approximated by third-order polynomial (Hermite) expansions in terms of orthogonal polynomials \cite{rahman08}, where the expansion coefficients were estimated by dimension-reduction integration \cite{xu04}.

\begin{figure}[h]
\begin{centering}
\includegraphics[scale=0.75]{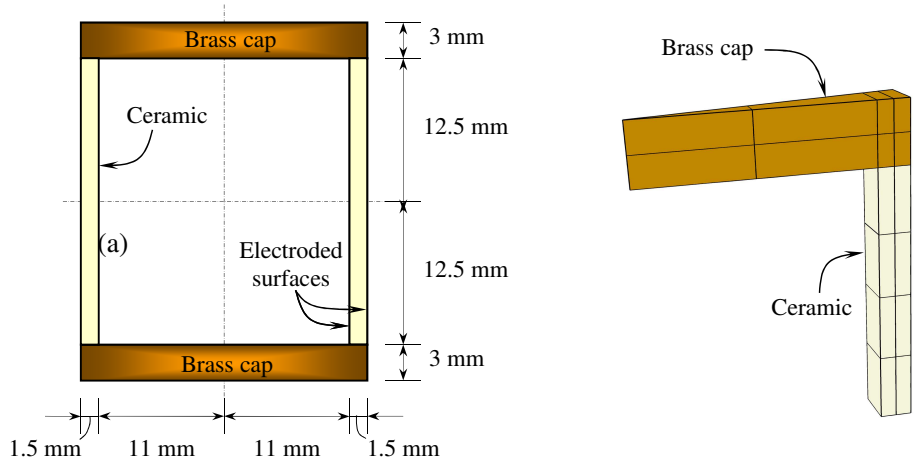}
\par\end{centering}
\caption{A piezoelectric transducer; note: geometry (left); finite-element discrete model (right).}
\label{figure3}
\end{figure}

\begin{table}
\caption{Statistical properties of the random input for the piezoelectric transducer}
\begin{centering}
\begin{tabular}{ccccc}
\hline
Random variable  & Property$^{(\mathrm{a})}$ & Mean & $\begin{array}{c}
\mathrm{\mathrm{Coefficient\; of}}\\
\mathrm{variation}
\end{array}$\tabularnewline
\hline
$X_{1},$ GPa & $D_{1111}$ & 115.4 & 0.15\tabularnewline
$X_{2},$ GPa & $D_{1122},D_{1133}$ & 74.28 & 0.15\tabularnewline
$X_{3},$ GPa & $D_{2222},D_{3333}$ & 139 & 0.15\tabularnewline
$X_{4},$ GPa & $D_{2233}$ & 77.84 & 0.15\tabularnewline
$X_{5},$ GPa & $D_{1212},D_{2323},D_{1313}$ & 25.64 & 0.15\tabularnewline
$X_{6},$ Coulomb/$\mathrm{m}{}^{2}$\ \  & $e_{111}$ & 15.08 & 0.1\tabularnewline
$X_{7},$ Coulomb/$\mathrm{m}{}^{2}$\ \  & $e_{122},e_{133}$ & -5.207 & 0.1\tabularnewline
$X_{8},$ Coulomb/$\mathrm{m}{}^{2}$\ \  & $e_{212},e_{313}$ & 12.71 & 0.1\tabularnewline
$X_{9},$ nF/m & $D_{11}$ & 5.872 & 0.1\tabularnewline
$X_{10},$ nF/m & $D_{22},D_{33}$ & 6.752 & 0.1\tabularnewline
$X_{11},$ GPa & $E_{b}$ & 104 & 0.15\tabularnewline
$X_{12}$ & $\nu_{b}$ & 0.37 & 0.05\tabularnewline
$X_{13},$ g/$\mathrm{m}^{3}$ & $\rho_{b}$ & 8500 & 0.15\tabularnewline
$X_{14},$ g/$\mathrm{m}^{3}$ & $\rho_{c}$ & 7500 & 0.15\tabularnewline
\hline
\end{tabular}
\par\end{centering}

{\footnotesize ~~~~~~~~~~~~~~~~~~~(a) $D_{ijkl}$
are elastic moduli of ceramic; $e_{ijk}$ are piezoelectric stress
coefficients }{\footnotesize \par}

{\footnotesize ~~~~~~~~~~~~~~~~~~~of ceramic; $D_{ij}$
are dielectric constants of ceramic; $E_{b},\nu_{b},\rho_{b}$ are
elastic }{\footnotesize \par}

{\footnotesize ~~~~~~~~~~~~~~~~~~~modulus, Poisson's
ratio, and mass density of brass; $\rho_{c}$ is mass density of ceramic.}
\label{table5}
\end{table}

Tables \ref{table6} and \ref{table7} present the means and standard deviations, respectively, of the first six natural frequencies, $\Omega_i$, $i=1,\cdots,6$, of the transducer by four dimensional decomposition methods: univariate ADD, univariate FDD, univariate HDD (linear), and bivariate ADD approximations; and crude Monte Carlo simulation.  The second-moment properties by decomposition methods, obtained by four-point Gauss-Hermite quadrature rules for estimating the expansion coefficients, are judged to be converged responses, as their changes due to further increases in polynomial order or number of quadrature nodes are negligibly small.  Therefore, the univariate and bivariate approximations require $14\times (14-1)+1=43$ and $14\times(14-1)(4-1)^2/2+(14\times(4-1)+1=862$ finite-element analyses, respectively \cite{rahman08}.  Due to expensive finite-element analysis, crude Monte Carlo simulation was conducted only up to 50,000 realizations, which should be adequate for providing benchmark solutions of the second-moment characteristics. The agreement between the means by approximate and Monte Carlo methods in Table \ref{table6} is excellent.  A comparison of standard deviations in Table \ref{table7} reveals superiority of the bivariate ADD approximation over univariate ADD and FDD approximations, as expected, but at a computational cost markedly higher than the univariate approximations.  More importantly, the univariate HDD approximation proposed is better than either univariate ADD or univariate FDD approximation and produces standard deviations very close to those by the bivariate ADD and crude Monte Carlo methods without carrying the computational burden of the latter methods.

\begin{table}
\caption{Means of natural frequencies of the piezoelectric transducer}
\begin{centering}
\begin{tabular}{cccccc}
\hline
 & \multicolumn{4}{c}{Approximation} & \tabularnewline
\cline{2-5}
Frequency & $\begin{array}{c}
\mathrm{\mathrm{Univariate}}\\
\mathrm{ADD}
\end{array}$ & $\begin{array}{c}
\mathrm{\mathrm{Univariate}}\\
\mathrm{FDD}
\end{array}$ & $\begin{array}{c}
\mathrm{\mathrm{Univariate}}\\
\mathrm{HDD}
\end{array}$ & $\begin{array}{c}
\mathrm{\mathrm{Bivariate}}\\
\mathrm{ADD}
\end{array}$ & $\begin{array}{c}
\mathrm{\mathrm{Crude}}\\
\mathrm{\mathrm{Monte}}\\
\mathrm{\mathrm{Carlo}}
\end{array}$\tabularnewline
\hline
$\Omega_{1}$, kHz & 19.45 & 19.45 & 19.45 & 19.38 & 19.35\tabularnewline
$\Omega_{2}$, kHz & 42.31 & 42.31 & 42.31 & 42.27 & 42.25\tabularnewline
$\Omega_{3}$, kHz & 59.23 & 59.23 & 59.23 & 59.42 & 59.33\tabularnewline
$\Omega_{4}$, kHz & 67.45 & 67.45 & 67.45 & 67.21 & 67.23\tabularnewline
$\Omega_{5}$, kHz & 90.57 & 90.57 & 90.57 & 90.60 & 90.62\tabularnewline
$\Omega_{6}$, kHz & 101.69 & 101.69 & 101.69 & 101.66 & 101.57\tabularnewline
\hline
\end{tabular}
\par\end{centering}
\label{table6}
\end{table}

\begin{table}
\caption{Standard deviations of natural frequencies of the piezoelectric transducer}
\begin{centering}
\begin{tabular}{cccccc}
\hline
 & \multicolumn{4}{c}{Approximation} & \tabularnewline
\cline{2-5}
Frequency & $\begin{array}{c}
\mathrm{\mathrm{Univariate}}\\
\mathrm{ADD}
\end{array}$ & $\begin{array}{c}
\mathrm{\mathrm{Univariate}}\\
\mathrm{FDD}
\end{array}$ & $\begin{array}{c}
\mathrm{\mathrm{Univariate}}\\
\mathrm{HDD}
\end{array}$ & $\begin{array}{c}
\mathrm{\mathrm{Bivariate}}\\
\mathrm{ADD}
\end{array}$ & $\begin{array}{c}
\mathrm{\mathrm{Crude}}\\
\mathrm{\mathrm{Monte}}\\
\mathrm{\mathrm{Carlo}}
\end{array}$\tabularnewline
\hline
$\Omega_{1}$, kHz & 2.30 & 2.30 & 2.68 & 2.54 & 2.66\tabularnewline
$\Omega_{2}$, kHz & 7.03 & 7.06 & 7.08 & 7.09 & 7.11\tabularnewline
$\Omega_{3}$, kHz & 6.65 & 6.66 & 6.74 & 6.82 & 6.83\tabularnewline
$\Omega_{4}$, kHz & 6.94 & 6.95 & 7.00 & 7.09 & 7.00\tabularnewline
$\Omega_{5}$, kHz & 7.37 & 7.38 & 7.48 & 7.58 & 7.51\tabularnewline
$\Omega_{6}$, kHz & 9.42 & 9.43 & 9.35 & 9.29 & 9.29\tabularnewline
\hline
\end{tabular}
\par\end{centering}
\label{table7}
\end{table}

\section{Conclusions and Outlook}

It is time to take stock and recap what has been accomplished so far and what remains to be done.

Two dimensional decompositions, namely, FDD and HDD, of a
multivariate function, representing finite products and sum-product
mixtures of lower-dimensional component functions, were developed. A
theorem, proven herein, reveals the relationship between all component
functions of FDD and ADD, so far available only for univariate and
bivariate component functions. Three function classes, comprising
purely additive functions, purely multiplicative functions, and their
mixtures, were examined to illustrate when and how an FDD approximation
is more precise or relevant than an ADD approximation and vice versa. However, when a
function is not endowed with a specific dimensional hierarchy, an HDD approximation, optimally blending ADD and FDD approximations, is more appropriate than either ADD or FDD approximation.  Furthermore, the FDD and HDD lead to alternative definitions of
effective dimension, reported in the literature associated with ADD
only. New closed-form or analytical expressions were derived for calculating the variances stemming from univariate truncations of all three decompositions. The subsequent mean-squared error analysis pertaining to univariate ADD, FDD, and HDD approximations finds appropriate conditions when one approximation is better than the other. Numerical results from four simple yet insightful examples and a practical engineering problem indicate that an HDD approximation, when called for, commits lower
errors than does ADD or FDD approximation. Therefore, HDD, whether formed
linearly or nonlinearly, is ideally suited to a general function approximation
that may otherwise require higher-variate ADD or FDD truncations for
rendering acceptable accuracy in stochastic solutions.

Future work involves developing an adaptive strategy for FDD and HDD approximations to solve industrial-scale, stochastic problems encountered in engineering and applied sciences.

\section*{Acknowledgments}

The author would like to acknowledge financial support from the U.S.
National Science Foundation under Grant Nos. CMMI-0969044 and CMMI-1130147.

\end{document}